\def\opn#1#2{\def#1{\operatorname{#2}}} 
\opn\chara{char} \opn\length{\ell}
\opn\projdim{proj\,dim} \opn\injdim{inj\,dim} \opn\rank{rank}
\opn\depth{depth} \opn\grade{grade} \opn\height{height}
\opn\embdim{emb\,dim} \opn\codim{codim}
\opn\Tr{Tr} \opn\bigrank{big\,rank}
\opn\superheight{superheight}\opn\lcm{lcm}
\opn\trdeg{tr\,deg}%
\opn\reg{reg} \opn\lreg{lreg}
\opn\Ker{Ker} \opn\Coker{Coker} \opn\Im{Im} \opn\Hom{Hom}
\opn\Tor{Tor} \opn\Ext{Ext} \opn\End{End} \opn\Aut{Aut} \opn\id{id}
\opn\nat{nat}
\opn\pff{pf}
\opn\Pf{Pf} \opn\GL{GL} \opn\SL{SL} \opn\mod{mod} \opn\ord{ord}
\def\Implies{\ifmmode\Longrightarrow \else
     \unskip${}\Longrightarrow{}$\ignorespaces\fi}
\def\implies{\ifmmode\Rightarrow \else
     \unskip${}\Rightarrow{}$\ignorespaces\fi}
\def\iff{\ifmmode\Longleftrightarrow \else
     \unskip${}\Longleftrightarrow{}$\ignorespaces\fi}
\newtheorem{Theorem}{Theorem}[section]
\newtheorem{Lemma}[Theorem]{Lemma}
\newtheorem{Assumption}[Theorem]{Assumption}
\newtheorem{Remark}[Theorem]{Remark}
\newtheorem{Algorithm}[Theorem]{Algorithm}
\newtheorem{Example}[Theorem]{Example}
\theoremstyle{definition}
\opn\ini{in} \opn\inm{inm} \opn\Sym{Sym} \opn\diag{diag}
\opn\Ii{(i)} \opn\Iii{(ii)}
\title{ A multilevel correction method for optimal controls of elliptic equation} 
\author{Wei Gong$^\diamond$}
\thanks{$^\diamond$ LSEC, Institute of Computational Mathematics, Academy of Mathematics and Systems
Science, Chinese Academy of Sciences, Beijing 100190, China
({\tt wgong@lsec.cc.ac.cn}).}
\author{Hehu Xie$^\dag$}
\thanks{$^\dag$NCMIS, LSEC, Institute of Computational Mathematics, Academy of Mathematics and Systems
Science, Chinese Academy of Sciences, Beijing 100190, China
({\tt hhxie@lsec.cc.ac.cn}).}
\author {Ningning Yan$^\ddag$}
\thanks{$^\ddag$NCMIS, LSEC, Institute of Systems Science, Academy of Mathematics and Systems Science,
 Chinese Academy of Sciences, Beijing 100190, China
({\tt ynn@amss.ac.cn}).}
\date{\today}
\begin{document}
\maketitle
{\bf Abstract:}\hspace*{10pt} {We propose in this paper a multilevel correction method
to solve optimal control problems constrained by elliptic equations with the finite
element method. In this scheme, solving optimization problem on the finest
finite element space is transformed to a series of solutions of linear boundary value problems
by the multigrid method on multilevel meshes and a series of solutions of
optimization problems on the coarsest finite element space. Our proposed scheme, instead of solving a 
large scale  optimization problem in the finest finite element space,  solves only
 a series of linear boundary value problems and the optimization
problems in a very low dimensional finite element space, and thus can improve the overall efficiency 
for the solution of optimal control problems governed by PDEs.}

{{\bf Keywords:}\hspace*{10pt} Optimal control problems, elliptic equation, control constraints,
finite element method, multilevel correction method  }

{\bf Subject Classification}: 49J20, 49K20, 65N15, 65N30

\section{Introduction}\setcounter{equation}{0}
Optimal control problems \cite{Hinze09book,Lions,LiuYan08book} play a very important role in
modern sciences and industries, and have many applications in such as chemical process, fluid dynamics, medicine,
 economics and so on. The finite element method is among the most important and popular numerical methods for solving
control problems governed by
partial differential equations. So far there have existed much work on the finite element
method for the optimal control problems. The interested readers are referred to
\cite{Falk,Geveci,LiuYan01SINUM,LiuYan01ACM,LiuYan08book}
and books and papers cited therein.

As we know, the control problems governed by partial differential equations
\cite{Hinze09book,Lions,LiuYan08book} are generally
nonlinear and result in large scale optimization problems which bring much more difficulties to design efficient solvers. It is
also well known that the multigrid or multilevel method is the optimal solver for many partial differential
equations discretized by the finite element method, finite difference method and so on (see, e.g.,  \cite{Shaidurovbook}).
Naturally, it is an important issue how to construct the multilevel type
numerical method for the optimal control problems governed by partial differential equations.
So far, there is only few work in this direction, we refer to \cite{Borzi09SIRE} for an overview.
 Since the classical multilevel or multigrid method for
the optimal control problem is designed to solve the linear algebraic systems formulated on each 
step of the optimization algorithm, it is not so easy to give the analysis on optimal error
estimates with the optimal computational complexity \cite{Borzi09SIRE}.

The aim of this paper is to propose a multilevel correction method for the
optimal control problems governed by partial differential equations
 based on the multilevel correction idea introduced in \cite{LinXie,Xie_IMA,Xie_JCP}.
In this method, solving the control problem will not be more difficult than solving
the corresponding linear boundary value problems. The multilevel correction method for the control
problem is based on a series of nested finite element spaces with different level
of accuracy which can be built with the same way as the multilevel method for boundary
value problems.
The multilevel correction scheme can be described as follows:
(1) solve the control problem in an initial coarse finite element space;
(2) use the multigrid method to solve two additional linear boundary value problems
which are constructed by using the previous obtained state, adjoint state and control approximations;
(3) solve a control problem again on the finite element space which is constructed by combining
the coarsest finite element space with the obtained state and adjoint state approximations in
step (2). Then go to step (2) for the next loop until stop. In this method, we replace
solving control problem on the finest finite element space by
solving a series of linear boundary value problems with multigrid scheme in the
corresponding series of finite element spaces and a series of control problems in the coarsest
finite element space.
The corresponding error and computational work estimates
of the proposed multilevel correction scheme for the control problem will also be analyzed.
Based on the analysis, the proposed method can obtain optimal errors with an almost optimal
computational complexity. So our proposed multilevel correction method can improve the overall efficiency
for solving the control problem as it does for linear boundary value problems.

An outline of the paper goes as follows. In Section 2, we introduce the finite element
method for the optimal control problem.
The multilevel correction method for the control problem is given in Sections 3.
In Section 4, we extend the multilevel correction method to the optimal control
problems governed by semilinear elliptic equation.  Section 5 is devoted to providing the numerical results to validate the
efficiency of the proposed numerical scheme.
Some concluding remarks are given in the last section.

\section{Finite element method for optimal control problem}\setcounter{equation}{0}
Let $\Omega\subset \mathbb{R}^{d}$, $d=2,3$ be a bounded and convex polygonal or polyhedral domain. 
Let $\|\cdot\|_{m,s,\Omega}$  and $\|\cdot\|_{m,\Omega}$ be the usual norms of the Sobolev spaces 
$W^{m,s}(\Omega)$ and $H^m(\Omega)$ respectively. Let  $|\cdot|_{m,s,\Omega}$ and $|\cdot|_{m,\Omega}$ be 
the usual seminorms of the above-mentioned two spaces respectively.

In this section, we introduce the finite element method for the optimal control problem constrained by
elliptic equations. The corresponding a  priori error estimates will also be given.

At first we consider the following linear-quadratic optimal control problem:
\begin{eqnarray}\label{OCP}
\min\limits_{u\in U_{ad}}\ \ J(y,u)={1\over
2}\|y-y_d\|_{0,\Omega}^2 +
\frac{\alpha}{2}\|u\|_{0,\Omega}^2
\end{eqnarray}
subject to
\begin{equation}\label{OCP_state}
\left\{\begin{array}{llr}
-\Delta y=f+u \ \ &\mbox{in}\ \Omega, \\
\ \ \ \ \ y=0  \ \ \ &\mbox{on}\ \partial\Omega.
\end{array} \right.
\end{equation}
The admissible control set is of box type:
\begin{eqnarray}
U_{ad}:=\Big\{u\in L^2(\Omega):\  a(x)\leq u(x)\leq b(x)\ \
 &\mbox{a.e.\ in}\ \Omega\Big\}\label{control_set}
\end{eqnarray}
with $a(x)< b(x)$ for a.e. $x\in\Omega$. We require $a,b\in L^\infty(\Omega)$.

Since the state equation (\ref{OCP_state}) is affine linear
with respect to the control $u$, we can introduce
 a linear operator $S:L^2(\Omega)\rightarrow H_0^1(\Omega)$ such that $y=Su+y_f$, where
 $y_f\in H_0^1(\Omega)$ is the solution of (\ref{OCP_state}) corresponding to the right hand side $f$.
  Then standard elliptic regularity theory gives $y\in H^2(\Omega)$. With this notation 
  we can formulate a reduced optimization problem
\begin{eqnarray}\label{OCP_Operator}
\min\limits_{u\in U_{ad}} \hat J(u):=J(Su,u)={1\over
2}\|Su+y_f-y_d\|_{0,\Omega}^2 +
\frac{\alpha}{2}\|u\|_{0,\Omega}^2.
\end{eqnarray}
Since the above optimization problem is linear and strictly convex,
there exists a unique solution
 $u\in U_{ad}$ (see \cite{Lions}). Moreover, the first order necessary and sufficient
 optimality condition can be stated as follows:
\begin{eqnarray}\label{reduced_opt}
J'(u)(v-u)=(\alpha u+S^*(Su+y_f-y_d),v-u)\geq 0, \
\ \ \ \ \forall v\in U_{ad},
\end{eqnarray}
where $S^*$ is the adjoint of $S$ (\cite{Hinze09book}).
Introducing the adjoint state $p=S^*(Su+y_f-y_d)\in H_0^1(\Omega)$,
we are led to the following optimality condition
\begin{equation}\label{OCP_OPT}
\left\{\begin{array}{llr}a(y,v)=(f+u,v),\ \ &\forall v\in H_0^1(\Omega),\\
a(w,p)=(y-y_d,w),\ \ &\forall w\in H_0^1(\Omega),\\
(\alpha u+p,v-u)\geq 0, \ \ &\forall v\in U_{ad},
\end{array} \right.
\end{equation}
where we use the standard notations
\begin{eqnarray}
a(y,v):=\int_\Omega\nabla y\nabla vdx,\ \ &\forall y,v\in H_0^1(\Omega).\nonumber\\
(y,v):=\int_\Omega yvdx,\ \ &\forall y,v\in L^2(\Omega).\nonumber
\end{eqnarray}
Hereafter, we call $u$, $y$ and $p$ the optimal control, state and adjoint state, respectively.

With the admissible control set (\ref{control_set}) we can get the
explicit representation of the optimal control $u$
through the adjoint state $p$
\begin{eqnarray}
u(x)=P_{U_{ad}}\big\{-\frac{1}{\alpha}p(x)\big\},\label{p_to_u}
\end{eqnarray}
where $P_{U_{ad}}$ is the orthogonal projection operator onto $U_{ad}$.

Let $\mathcal{T}_h$ be a regular and quasi-uniform triangulation of $\Omega$ such that
$\bar\Omega=\cup_{\tau\in\mathcal{T}_h}\bar\tau$. On $\mathcal{T}_h$
we construct the piecewise linear and continuous finite element space $V_{h}$
such that $V_h\subset C(\bar\Omega)\cap H_0^1(\Omega)$.
Based on the finite element space $V_h$,
we can define the finite dimensional approximation to the optimal
control problem (\ref{OCP})-(\ref{OCP_state}) as follows: Find
$(\bar{u}_h,\bar{y}_h,\bar{p}_h)\in U_{ad}\times V_h\times V_h$ such that
\begin{eqnarray}\label{OCP_h}
\min\limits_{\bar u_h\in U_{ad}}J_h(\bar y_h,\bar u_h)={1\over
2}\|\bar y_h-y_d\|_{0,\Omega}^2 +
\frac{\alpha}{2}\|\bar u_h\|_{0,\Omega}^2
\end{eqnarray}
subject to
\begin{eqnarray}\label{OCP_state_h}
a(\bar y_h,v_h)=(f+\bar u_h,v_h),\ \ \ \ \forall v_h\in V_h.
\end{eqnarray}
In this paper, we use the piecewise linear finite element
 to approximate the state $y$, and variational discretization for the optimal
 control $u$ (see \cite{Hinze05COAP}).
Similar to the infinite dimensional problem (\ref{OCP})-(\ref{OCP_state}),
the above discretized optimization problem also admits a unique
solution $\bar{u}_h\in U_{ad}$. The discretized first order necessary and sufficient optimality
condition can be stated as follows:
\begin{equation}\label{OCP_OPT_h}
\left\{\begin{array}{llr}a(\bar y_h,v_h)=(f+\bar u_h,v_h),\ \ &\forall v_h\in V_h,\\
 a(w_h,\bar p_h)=(\bar y_h-y_d,w_h),\ \ &\forall w_h\in V_h,\\
 (\alpha \bar u_h+\bar p_h,v_h-\bar u_h)\geq 0, \ \ \ &\forall v_h\in U_{ad}.
\end{array} \right.
\end{equation}
The above optimization problem can be solved by projected gradient method
or semi-smooth Newton method, see \cite{Hintermueller03SIOPT}, \cite{Hinze09book}, \cite{Vierling} and \cite{LiuYan08book}
 for more details.

Now we state the following error estimate results for the finite element approximation
of the control problem and the proof can be found in \cite{Hinze05COAP}.
\begin{Theorem}\label{Thm:2.1}
Let $(u,y,p)\in U_{ad}\times H_0^1(\Omega)\times H_0^1(\Omega)$ and
 $(\bar u_h,\bar y_h,\bar p_h)\in U_{ad}\times V_h\times V_h$ be the
 solutions of problems (\ref{OCP})-(\ref{OCP_state}) and
 (\ref{OCP_h})-(\ref{OCP_state_h}), respectively.
 Then the following error estimates hold
\begin{eqnarray}\label{u_error}
\|u-\bar u_h\|_{0,\Omega}+\|y-\bar y_h\|_{0,\Omega}
+\|p-\bar p_h\|_{0,\Omega}\leq Ch^{2}.
\end{eqnarray}
\end{Theorem}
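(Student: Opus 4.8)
The plan is to combine the continuous and discrete variational inequalities in (\ref{OCP_OPT}) and (\ref{OCP_OPT_h}) to control $\|u-\bar u_h\|_{0,\Omega}$ first, and then to recover the state and adjoint errors from the control error via stability of the discrete solution operators. First I would test the continuous inequality $(\alpha u+p,v-u)\geq 0$ with the admissible choice $v=\bar u_h\in U_{ad}$, test the discrete inequality $(\alpha\bar u_h+\bar p_h,v_h-\bar u_h)\geq 0$ with $v_h=u\in U_{ad}$ (legal precisely because the variational discretization leaves the control set undiscretized), and add the two. Writing $e_u:=u-\bar u_h$, the $\alpha$-terms collapse to $-\alpha\|e_u\|_{0,\Omega}^2$ and one is left with
\[
\alpha\|u-\bar u_h\|_{0,\Omega}^2\leq(\bar p_h-p,u-\bar u_h).
\]

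To bound the right-hand side I would introduce the auxiliary discrete state $y_h(u)\in V_h$ and adjoint $p_h(u)\in V_h$ defined by $a(y_h(u),v_h)=(f+u,v_h)$ and $a(w_h,p_h(u))=(y_h(u)-y_d,w_h)$ for all $v_h,w_h\in V_h$, i.e.\ the Galerkin approximations computed with the \emph{exact} control $u$. Splitting $\bar p_h-p=(\bar p_h-p_h(u))+(p_h(u)-p)$, the second piece is a pure finite element error: since $\Omega$ is convex, elliptic $H^2$-regularity combined with an Aubin--Nitsche duality argument gives $\|p-p_h(u)\|_{0,\Omega}\leq Ch^2$ and likewise $\|y-y_h(u)\|_{0,\Omega}\leq Ch^2$.

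The crucial step, and the part I expect to be the main obstacle, is showing that the remaining cross term has the right sign. Subtracting the auxiliary equations from the discrete optimality system yields $a(\bar y_h-y_h(u),v_h)=(\bar u_h-u,v_h)$ and $a(w_h,\bar p_h-p_h(u))=(\bar y_h-y_h(u),w_h)$ for all $v_h,w_h\in V_h$. Testing the first identity with $v_h=\bar p_h-p_h(u)$, the second with $w_h=\bar y_h-y_h(u)$, and exploiting the symmetry of $a(\cdot,\cdot)$, both left-hand sides coincide, so
\[
(\bar u_h-u,\bar p_h-p_h(u))=\|\bar y_h-y_h(u)\|_{0,\Omega}^2\geq 0,
\]
which means $(\bar p_h-p_h(u),u-\bar u_h)\leq 0$ and this term can simply be discarded. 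Combining the pieces, $\alpha\|u-\bar u_h\|_{0,\Omega}^2\leq(p_h(u)-p,u-\bar u_h)\leq Ch^2\|u-\bar u_h\|_{0,\Omega}$, and dividing gives $\|u-\bar u_h\|_{0,\Omega}\leq Ch^2$.

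Finally I would recover the remaining errors by the triangle inequality. For the state, $\|y-\bar y_h\|_{0,\Omega}\leq\|y-y_h(u)\|_{0,\Omega}+\|y_h(u)-\bar y_h\|_{0,\Omega}$, where the first term is $O(h^2)$ by the finite element estimate above and the second is controlled through the stability bound $\|y_h(u)-\bar y_h\|_{0,\Omega}\leq C\|u-\bar u_h\|_{0,\Omega}$, obtained by testing $a(\bar y_h-y_h(u),v_h)=(\bar u_h-u,v_h)$ with $v_h=\bar y_h-y_h(u)$ and invoking coercivity together with the Poincar\'e inequality. The adjoint error $\|p-\bar p_h\|_{0,\Omega}$ is handled identically, now using $a(w_h,\bar p_h-p_h(u))=(\bar y_h-y_h(u),w_h)$ together with the state bound just established. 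Each contribution is $O(h^2)$, which yields (\ref{u_error}).
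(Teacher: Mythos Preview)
Your proposal is correct and is precisely the argument of Hinze~\cite{Hinze05COAP}, which is exactly where the paper sends the reader: the paper does not supply its own proof of Theorem~\ref{Thm:2.1} but states that ``the proof can be found in \cite{Hinze05COAP}''. The key devices you use---testing the two variational inequalities against each other (possible because the control set is not discretized), introducing the auxiliary Galerkin pair $(y_h(u),p_h(u))$, and exploiting the sign identity $(\bar u_h-u,\bar p_h-p_h(u))=\|\bar y_h-y_h(u)\|_{0,\Omega}^2\geq 0$---are exactly the ingredients of that reference, so there is nothing to compare.
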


\section{Multilevel correction method for optimal control problems}
\setcounter{equation}{0}
In this section, we propose a type of multilevel correction method for the optimal
control problem (\ref{OCP_h})-(\ref{OCP_state_h}).
In this scheme, solving the optimization problem on the finest finite element spaces
is transformed to a series of solutions of linear boundary value problems by the
 multigrid method on multilevel meshes and a series of solutions of
  optimization problems on the coarsest finite element space.

In order to introduce the multilevel correction scheme, we define a sequence of
 triangulations $\mathcal{T}_{h_k}$ of $\Omega$ determined as follows.
Suppose a very coarse mesh $\mathcal{T}_{H}$ is given
and let $\mathcal{T}_{h_k}$ be obtained from $\mathcal{T}_{h_{k-1}}$  via regular
refinement (produce $\beta^d$ subelements) such that
$$h_k\approx\frac{1}{\beta}h_{k-1}$$
for $k=1,\cdots, n$ and $\mathcal{T}_{h_0}:=\mathcal{T}_H$. Here $\beta\geq 2$ is a positive integer.

Let $V_H$ denote the coarsest linear finite element space defined on the coarsest
mesh $\mathcal{T}_H$. Besides, we construct a series of finite element spaces $V_{h_1}$,
$V_{h_2}$, $\cdots$, $V_{h_n}$ defined on the corresponding series of multilevel meshes
$\mathcal{T}_{h_k}$ ($k=1,2,\cdots,n$) such that
 $V_H\subseteq V_{h_1}\subset V_{h_2}\subset\cdots\subset V_{h_n}$.

In order to design the multilevel correction method for the optimization problem, we
first introduce an one correction step which can improve the accuracy of the
given numerical approximations for the state, adjoint state and optimal control. This correction step contains
solving two linear boundary value problems with multigrid method in the finer finite
element space and an optimization problem on the coarsest finite element space.

Assume that we have obtained an approximate solution
$(u_{h_{k}},y_{h_{k}},p_{h_{k}})\in U_{ad}\times V_{h_{k}}\times V_{h_{k}}$ on the
$k$-th level mesh $\mathcal{T}_{h_k}$.
Now we introduce an one correction step to improve the accuracy of
 the current approximation $(u_{h_{k}},y_{h_{k}},p_{h_{k}})$.
\begin{Algorithm}\label{Alg:3.1} One correction step:
\begin{enumerate}
\item Find $y_{h_{k+1}}^*\in V_{h_{k+1}}$ such that
\begin{equation}\label{step_2}
a(y^*_{h_{k+1}},v_{h_{k+1}}) = (f+u_{h_{k}},v_{h_{k+1}}),\ \
\ \ \forall\ v_{h_{k+1}}\in V_{h_{k+1}}.
\end{equation}
Solve the above equation with multigrid method to obtain an approximation
$\hat y_{h_{k+1}}\in V_{h_{k+1}}$ with error
$\|\hat y_{h_{k+1}}-y_{h_{k+1}}^*\|_{1,\Omega}\leq Ch_{h_k}^{2}$
and define $\hat{y}_{h_{k+1}}:=MG(V_{h_{k+1}},u_{h_k})$.

\item Find $p_{h_{k+1}}^*\in V_{h_{k+1}}$ such that
\begin{equation}\label{step_3}
a(w_{h_{k+1}},p^*_{h_{k+1}}) = (\hat y_{h_{k+1}}-y_d,v_{h_{k+1}}),\ \
\ \ \forall\ v_{h_{k+1}}\in V_{h_{k+1}}.
\end{equation}
Solve the above equation with multigrid method to obtain an approximation
 $\hat p_{h_{k+1}}\in V_{h_{k+1}}$ with error
 $\|\hat p_{h_{k+1}}-p_{h_{k+1}}^*\|_{1,\Omega}\leq Ch_{h_k}^{2}$
 and define $\hat{p}_{h_{k+1}}:=MG(V_{h_{k+1}},\hat y_{h_{k+1}})$.

\item Define a new finite element space
$V_{H,h_{k+1}}:=V_H+{\rm span}\{\hat y_{h_{k+1}}\}+{\rm span}\{\hat p_{h_{k+1}}\}$
and solve the following optimal control problem:
\begin{eqnarray}\label{step_4}
\min\limits_{u_{h_{k+1}}\in U_{ad},\ y_{h_{k+1}}\in V_{H,h_{k+1}}}
 J(y_{h_{k+1}},u_{h_{k+1}})={1\over
2}\|y_{h_{k+1}}-y_d\|_{0,\Omega}^2 +
\frac{\alpha}{2}\|u_{h_{k+1}}\|_{0,\Omega}^2
\end{eqnarray}
subject to
\begin{equation}\label{step_4_state}
a(y_{h_{k+1}},v_{H,h_{k+1}}) = (f+u_{h_{k+1}},v_{H,h_{k+1}}),
\ \ \ \ \forall\ v_{H,h_{k+1}}\in V_{H,h_{k+1}}.
\end{equation}
The corresponding optimality condition reads:
 Find $(u_{h_{k+1}},y_{h_{k+1}},p_{h_{k+1}})
 \in U_{ad}\times V_{H,h_{k+1}}\times V_{H,h_{k+1}}$
 such that
\begin{equation}\label{step_4_OPT}
\left\{\begin{array}{llr}
a(y_{h_{k+1}},v_{H,h_{k+1}}) = (f+u_{h_{k+1}},v_{H,h_{k+1}}),
\ \ &\forall\ v_{H,h_{k+1}}\in V_{H,h_{k+1}},\\
a(v_{H,h_{k+1}},p_{h_{k+1}}) = (y_{h_{k+1}}-y_d,v_{H,h_{k+1}}),
\ \ &\forall\ v_{H,h_{k+1}}\in V_{H,h_{k+1}},\\
(\alpha u_{h_{k+1}}+p_{h_{k+1}},v-u_{h_{k+1}})\geq 0,
\ \ &\forall\ v\in U_{ad}.
\end{array} \right.
\end{equation}
\end{enumerate}
We define the output of above algorithm as
\begin{eqnarray}\label{correction}
(u_{h_{k+1}},y_{h_{k+1}},p_{h_{k+1}})
=\mbox{\rm Correction}(V_H,u_{h_{k}},y_{h_k},p_{h_k},V_{h_{k+1}}),
\end{eqnarray}
where $V_H$ denotes the coarsest finite element space, $(u_{h_{k}}, y_{h_k}, p_{h_k})$
is the given approximation of the optimal control, the state and the adjoint state
in the coarse finite element space $V_{h_k}$ and $V_{h_{k+1}}$ denotes the finer finite
element space.
\end{Algorithm}
\begin{Remark}
In Algorithm \ref{Alg:3.1}, one needs to solve an optimization problem (\ref{step_4})-(\ref{step_4_state})
 on the finite element space
$V_{H,h_{k+1}}$, there are several ways to provide a good initial guess for the optimization algorithm 
which may speed up the convergence. One option is to use $u_{h_{k}}$ as initial guess, while the other
 choice is $P_{U_{ad}}\{-\frac{1}{\alpha}\hat p_{h_{k+1}}\}$.
\end{Remark}
In the following of this paper, we denote
$(\bar u_{h_{k+1}},\bar y_{h_{k+1}},\bar p_{h_{k+1}})\in U_{ad}
\times V_{h_{k+1}}\times V_{h_{k+1}}$ the finite element solution to the
discrete optimal control problems (\ref{OCP_h})-(\ref{OCP_state_h})
in the finite element space $V_{h_{k+1}}$.
We are able to analyze the error estimates between solutions
 $(\bar u_{h_{k+1}},\bar y_{h_{k+1}},\bar p_{h_{k+1}})$
 and the correction one $(u_{h_{k+1}},y_{h_{k+1}},p_{h_{k+1}})$
 on mesh level $\mathcal{T}_{h_{k+1}}$.
\begin{Theorem}\label{Thm:3.1}
Let $(\bar u_{h_{k+1}},\bar y_{h_{k+1}},
\bar p_{h_{k+1}})\in U_{ad}\times V_{h_{k+1}}\times V_{h_{k+1}}$ be
the solution of problems (\ref{OCP_h})-(\ref{OCP_state_h}) and
$(u_{h_{k+1}},y_{h_{k+1}},p_{h_{k+1}})$ be the numerical approximation
by Algorithm \ref{Alg:3.1}, respectively.
Assume there exists a real number $\eta_{h_k}$ such that
$(u_{h_k}, y_{h_k}, p_{h_k})$ have the following error estimates
\begin{eqnarray}\label{assume_error}
\|\bar u_{h_k}-u_{h_k}\|_{0,\Omega}+\|\bar y_{h_k}-y_{h_k}\|_{0,\Omega}
+\|\bar p_{h_k}-p_{h_k}\|_{0,\Omega} = \eta_{h_k}.
\end{eqnarray}
Then the following error estimates hold
\begin{eqnarray}\label{correct_error}
\|\bar u_{h_{k+1}}-u_{h_{k+1}}\|_{0,\Omega}
+\|\bar y_{h_{k+1}}-y_{h_{k+1}}\|_{0,\Omega}
+\|\bar p_{h_{k+1}}-p_{h_{k+1}}\|_{0,\Omega}
\leq C\eta_{h_{k+1}},
\end{eqnarray}
where $\eta_{h_{k+1}}=H(h_{k}^{2}+\eta_{h_{k}})$.
\end{Theorem}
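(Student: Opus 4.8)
The plan is to prove (\ref{correct_error}) by a perturbation-plus-duality argument in three stages: first control the multigrid-produced state and adjoint against the true finite element solution on $V_{h_{k+1}}$, then upgrade this to an $L^2$ gain of order $H$ through an Aubin--Nitsche duality that exploits the embedding $V_H\subseteq V_{H,h_{k+1}}$, and finally close the optimal-control coupling through the two variational inequalities. Throughout I write $e_u=u_{h_{k+1}}-\bar u_{h_{k+1}}$, $e_y=y_{h_{k+1}}-\bar y_{h_{k+1}}$, $e_p=p_{h_{k+1}}-\bar p_{h_{k+1}}$.

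First I would establish the consistency bounds $\|\hat y_{h_{k+1}}-\bar y_{h_{k+1}}\|_{1,\Omega}+\|\hat p_{h_{k+1}}-\bar p_{h_{k+1}}\|_{1,\Omega}\le C(h_k^2+\eta_{h_k})$. Writing $\hat y_{h_{k+1}}-\bar y_{h_{k+1}}=(\hat y_{h_{k+1}}-y^*_{h_{k+1}})+(y^*_{h_{k+1}}-\bar y_{h_{k+1}})$, the first term is $\le Ch_k^2$ by the multigrid bound in step (\ref{step_2}), while $y^*_{h_{k+1}}-\bar y_{h_{k+1}}$ solves $a(y^*_{h_{k+1}}-\bar y_{h_{k+1}},v_{h_{k+1}})=(u_{h_k}-\bar u_{h_{k+1}},v_{h_{k+1}})$ on $V_{h_{k+1}}$, so by $H^1$-stability of the discrete Laplace solve its norm is $\le C\|u_{h_k}-\bar u_{h_{k+1}}\|_{0,\Omega}$. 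The latter is bounded by $C(\eta_{h_k}+h_k^2)$ after inserting $\bar u_{h_k}$ and $u$ and invoking Theorem \ref{Thm:2.1} together with assumption (\ref{assume_error}). The adjoint bound follows the same way, with $\hat y_{h_{k+1}}-\bar y_{h_{k+1}}$ now appearing as the data perturbation in step (\ref{step_3}).

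Next, let $\tilde y,\tilde p\in V_{H,h_{k+1}}$ be the Galerkin (Ritz) projections of $\bar y_{h_{k+1}},\bar p_{h_{k+1}}$ onto $V_{H,h_{k+1}}$, i.e. the solutions of the state and adjoint equations with data $\bar u_{h_{k+1}}$ and $\bar y_{h_{k+1}}-y_d$ but tested only against $V_{H,h_{k+1}}$. Since $\hat y_{h_{k+1}},\hat p_{h_{k+1}}\in V_{H,h_{k+1}}$, C\'ea's lemma and the previous paragraph give $\|\tilde y-\bar y_{h_{k+1}}\|_{1,\Omega}\le C\|\hat y_{h_{k+1}}-\bar y_{h_{k+1}}\|_{1,\Omega}\le C(h_k^2+\eta_{h_k})$, and likewise for $\tilde p$. \emph{This is the crucial step:} running Aubin--Nitsche duality and approximating the dual solution in the embedded subspace $V_H$ (using the $H^2$-regularity available on the convex domain, which yields an $O(H)$ approximation in $H^1$) upgrades these energy estimates to $\|\tilde y-\bar y_{h_{k+1}}\|_{0,\Omega}+\|\tilde p-\bar p_{h_{k+1}}\|_{0,\Omega}\le CH(h_k^2+\eta_{h_k})$. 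This is precisely where the gain factor $H$ in $\eta_{h_{k+1}}$ originates, and extracting it rigorously is the main obstacle, since $\bar y_{h_{k+1}},\bar p_{h_{k+1}}\notin V_{H,h_{k+1}}$ forces the duality to be carried through the projection rather than applied directly.

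Finally I would close the coupling. Set $\delta_y=y_{h_{k+1}}-\tilde y$ and $\delta_p=p_{h_{k+1}}-\tilde p$. Testing the variational inequalities in (\ref{step_4_OPT}) and (\ref{OCP_OPT_h}) against each other's control and adding gives $\alpha\|e_u\|_{0,\Omega}^2\le-(e_p,e_u)$. Using the discrete relations $a(\delta_y,v)=(e_u,v)$ and $a(v,\delta_p)=(e_y,v)$ on $V_{H,h_{k+1}}$ together with the symmetry of $a(\cdot,\cdot)$ yields the identity $(\delta_p,e_u)=(e_y,\delta_y)$; decomposing $e_y=\delta_y+(\tilde y-\bar y_{h_{k+1}})$ and $e_p=\delta_p+(\tilde p-\bar p_{h_{k+1}})$ then gives
\[
\alpha\|e_u\|_{0,\Omega}^2+\|\delta_y\|_{0,\Omega}^2
\le -(\tilde y-\bar y_{h_{k+1}},\delta_y)-(\tilde p-\bar p_{h_{k+1}},e_u).
\]
A Young inequality absorbs $\|\delta_y\|_{0,\Omega}$ and $\|e_u\|_{0,\Omega}$ into the left-hand side, leaving $\|e_u\|_{0,\Omega}+\|\delta_y\|_{0,\Omega}\le CH(h_k^2+\eta_{h_k})$ by the step-two bounds. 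Recovering $\|e_y\|_{0,\Omega}$ from $e_y=\delta_y+(\tilde y-\bar y_{h_{k+1}})$ and $\|e_p\|_{0,\Omega}$ from $\|\delta_p\|_{0,\Omega}\le C\|e_y\|_{0,\Omega}$ and then summing gives $\|e_u\|_{0,\Omega}+\|e_y\|_{0,\Omega}+\|e_p\|_{0,\Omega}\le CH(h_k^2+\eta_{h_k})=C\eta_{h_{k+1}}$, which is the claimed estimate.
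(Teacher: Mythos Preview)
Your proposal is correct and follows essentially the same route as the paper's own proof: your $\tilde y,\tilde p$ are exactly the paper's auxiliary functions $y_{h_{k+1}}(\bar u_{h_{k+1}}),\,p_{h_{k+1}}(\bar y_{h_{k+1}})\in V_{H,h_{k+1}}$, and your identity $(\delta_p,e_u)=(e_y,\delta_y)$ followed by Young's inequality reproduces the paper's derivation of (\ref{correct_error_5}). The only difference is cosmetic---you organise the final coupling slightly more explicitly via $\delta_y,\delta_p$, whereas the paper writes the same algebra inline in (\ref{correct_error_4}); also note that the Aubin--Nitsche step is in fact entirely standard here (Galerkin orthogonality on $V_{H,h_{k+1}}$ holds because $V_{H,h_{k+1}}\subset V_{h_{k+1}}$), so the ``main obstacle'' you flag is not a genuine difficulty.
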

\begin{proof}
Note that
\begin{eqnarray*}
a(\bar y_{h_{k+1}},v_{h_{k+1}})=(f+\bar u_{h_{k+1}},v_{h_{k+1}}),\ \
\ \ \forall v_{h_{k+1}}\in V_{h_{k+1}},
\end{eqnarray*}
we conclude from (\ref{step_2}) that
\begin{eqnarray*}
a(\bar y_{h_{k+1}}-y^*_{h_{k+1}},v_{h_{k+1}})
=(\bar u_{h_{k+1}}-u_{h_{k}},v_{h_{k+1}}),\ \
\ \ \forall v_{h_{k+1}}\in V_{h_{k+1}},
\end{eqnarray*}
which implies
\begin{eqnarray*}
\|\bar y_{h_{k+1}}-y^*_{h_{k+1}}\|_{1,\Omega}
&\leq& C\|\bar u_{h_{k+1}}-u_{h_{k}}\|_{0,\Omega}\nonumber\\
&\leq& C\|\bar u_{h_{k+1}}-\bar u_{h_{k}}\|_{0,\Omega}
+C\|\bar u_{h_{k}}-u_{h_{k}}\|_{0,\Omega}\nonumber\\
&\leq& C(h_k^{2}+\eta_{h_k}).
\end{eqnarray*}
Note that $\hat y_{h_{k+1}}$ is obtained by the multigrid method with estimate
$\|\hat y_{h_{k+1}}-y_{h_{k+1}}^*\|_{1,\Omega}\leq
 Ch_{h_k}^{2}$, triangle inequality yields
\begin{eqnarray}\label{correct_error_2}
\|\bar y_{h_{k+1}}-\hat y_{h_{k+1}}\|_{1,\Omega}&\leq&
\|\bar y_{h_{k+1}}-y^*_{h_{k+1}}\|_{1,\Omega}
+\|y^*_{h_{k+1}}-\hat y_{h_{k+1}}\|_{1,\Omega}\nonumber\\
&\leq& C(h_k^{2}+\eta_{h_k}).
\end{eqnarray}
Similarly, we can prove
\begin{eqnarray}\label{correct_error_3}
\|\bar p_{h_{k+1}}-\hat p_{h_{k+1}}\|_{1,\Omega}
&\leq&C(h_k^{2}+\eta_{h_k}).
\end{eqnarray}
From (\ref{OCP_OPT_h}) and (\ref{step_4_OPT}), we have
\begin{eqnarray*}
(\alpha \bar u_{h_{k+1}}+
\bar p_{h_{k+1}},v_{h_{k+1}}-\bar u_{h_{k+1}})
\geq 0,\ \ \ \ \forall v_{h_{k+1}}\in U_{ad}
\end{eqnarray*}
and
\begin{eqnarray*}
(\alpha u_{h_{k+1}}+ p_{h_{k+1}},w_{h_{k+1}}-u_{h_{k+1}})\geq 0,
\ \ \ \ \forall w_{h_{k+1}}\in U_{ad}.
\end{eqnarray*}
Setting $v_{h_{k+1}}=u_{h_{k+1}}$ and $w_{h_{k+1}}=\bar u_{h_{k+1}}$,
adding the above two inequalities together we are led to
\begin{eqnarray}\label{correct_error_4}
&&\alpha\|\bar u_{h_{k+1}}-u_{h_{k+1}}\|_{0,\Omega}^2\nonumber\\
&\leq& (\bar u_{h_{k+1}}-u_{h_{k+1}},p_{h_{k+1}}-\bar p_{h_{k+1}})\nonumber\\
&=&\big(\bar u_{h_{k+1}}-u_{h_{k+1}},p_{h_{k+1}}-p_{h_{k+1}}(\bar y_{h_{k+1}})\big)
+\big(\bar u_{h_{k+1}}-u_{h_{k+1}},p_{h_{k+1}}(\bar y_{h_{k+1}})-\bar p_{h_{k+1}}\big)\nonumber\\
&=&a\big(y_{h_{k+1}}(\bar u_{h_{k+1}})-y_{h_{k+1}},p_{h_{k+1}}-p_{h_{k+1}}(\bar y_{h_{k+1}})\big)\nonumber\\
&&\ \ \ \ +\big(\bar u_{h_{k+1}}-u_{h_{k+1}},p_{h_{k+1}}(\bar y_{h_{k+1}})-\bar p_{h_{k+1}}\big)\nonumber\\
&=&\big(y_{h_{k+1}}(\bar u_{h_{k+1}})-y_{h_{k+1}},y_{h_{k+1}}-\bar y_{h_{k+1}}\big)\nonumber\\
&&\ \ \ \ +\big(\bar u_{h_{k+1}}-u_{h_{k+1}},p_{h_{k+1}}(\bar y_{h_{k+1}})-\bar p_{h_{k+1}}\big),
\end{eqnarray}
where $y_{h_{k+1}}(\bar u_{h_{k+1}})\in V_{H,h_{k+1}}$ and
$p_{h_{k+1}}(\bar y_{h_{k+1}})\in V_{H,h_{k+1}}$ satisfy
\begin{eqnarray*}
a(y_{h_{k+1}}(\bar u_{h_{k+1}}),v_{H,h_{k+1}})=(f+\bar u_{h_{k+1}},v_{H,h_{k+1}}),
\ \ \ \ \forall v_{H,h_{k+1}}\in V_{H,h_{k+1}}
\end{eqnarray*}
and
\begin{eqnarray*}
a(w_{H,h_{k+1}},p_{h_{k+1}}(\bar y_{h_{k+1}}))=(\bar y_{h_{k+1}}-y_d,w_{H,h_{k+1}}),
\ \ \ \ \forall w_{H,h_{k+1}}\in V_{H,h_{k+1}}.
\end{eqnarray*}
Then triangle inequality and $\epsilon$-Young inequality yield
\begin{eqnarray}\label{correct_error_5}
&&\|\bar u_{h_{k+1}}-u_{h_{k+1}}\|_{0,\Omega}+
\|\bar y_{h_{k+1}}-y_{h_{k+1}}\|_{0,\Omega}\nonumber\\
&\leq& C\big(\|y_{h_{k+1}}(\bar u_{h_{k+1}})-\bar y_{h_{k+1}}\|_{0,\Omega}
+\|p_{h_{k+1}}(\bar y_{h_{k+1}})-\bar p_{h_{k+1}}\|_{0,\Omega}\big).
\end{eqnarray}
It is easy to see that $y_{h_{k+1}}(\bar u_{h_{k+1}})$ is the finite element
 approximation to $\bar y_{h_{k+1}}$ on $V_{H,h_{k+1}}$ because of $V_{H,h_{k+1}}\subset V_{h_{k+1}}$.
  Standard Ce\'{a}-lemma implies (cf. \cite{Brenner})
\begin{eqnarray}
\|y_{h_{k+1}}(\bar u_{h_{k+1}})-\bar y_{h_{k+1}}\|_{1,\Omega}
&\leq& C\inf\limits_{v_{H,h_{k+1}}\in V_{H,h_{k+1}}}\|\bar y_{h_{k+1}}-v_{H,h_{k+1}}\|_{1,\Omega}\nonumber\\
&\leq&C\|\bar y_{h_{k+1}}-\hat y_{h_{k+1}}\|_{1,\Omega}\nonumber\\
&\leq& C(h_k^{2}+\eta_{h_k}).\label{correct_error_6}
\end{eqnarray}
From the following equation
\begin{eqnarray*}
a(y_{h_{k+1}}(\bar u_{h_{k+1}})-\bar y_{h_{k+1}},v_{H,h_{k+1}})=0,
\ \ \ \ \forall v_{H,h_{k+1}}\in V_{H,h_{k+1}},
\end{eqnarray*}
and Aubin-Nitsche technique (cf. \cite{Brenner}), we are able to prove the improved $L^2$-norm estimate
\begin{eqnarray}\label{correct_error_7}
\|y_{h_{k+1}}(\bar u_{h_{k+1}})-\bar y_{h_{k+1}}\|_{0,\Omega}&\leq&
CH\|y_{h_{k+1}}(\bar u_{h_{k+1}})-\bar y_{h_{k+1}}\|_{1,\Omega}\nonumber\\
&\leq&CH(h_k^{2}+\eta_{h_k}).
\end{eqnarray}
Similarly to (\ref{correct_error_6})-(\ref{correct_error_7}), we can derive
\begin{eqnarray}\label{correct_error_8}
\|p_{h_{k+1}}(\bar y_{h_{k+1}})-\bar p_{h_{k+1}}\|_{0,\Omega}
&\leq&CH(h_k^{2}+\eta_{h_k}).
\end{eqnarray}
Combining (\ref{correct_error_5}), (\ref{correct_error_7}) and (\ref{correct_error_8}) leads to
the following estimate
\begin{eqnarray}\label{correct_error_9}
\|\bar u_{h_{k+1}}-u_{h_{k+1}}\|_{0,\Omega}+\|\bar y_{h_{k+1}}-y_{h_{k+1}}\|_{0,\Omega}
&\leq&CH(h_k^{2}+\eta_{h_k}).
\end{eqnarray}
Using the triangle inequality, we obtain
\begin{eqnarray*}
\|\bar u_{h_{k+1}}-u_{h_{k+1}}\|_{0,\Omega}+\|\bar y_{h_{k+1}}-y_{h_{k+1}}\|_{0,\Omega}
+\|\bar p_{h_{k+1}}-p_{h_{k+1}}\|_{0,\Omega}
\leq CH(h_k^{2}+\eta_{h_k}),
\end{eqnarray*}
which is the desired result (\ref{correct_error}) and the proof is complete.
\end{proof}

Based on the sequence of nested finite element spaces
$V_{h_1}\subset V_{h_2}\subset\cdots\subset V_{h_n}$
and the one correction step defined in Algorithm \ref{Alg:3.1},
we can define the following multilevel correction method to solve the optimal control problem:
\begin{Algorithm}\label{Alg:3.2}
A multilevel correction method for optimal control problem:
\begin{enumerate}
\item Solve an optimal control problem in the initial finite element space $V_{h_1}$:
\begin{eqnarray}\label{step_1}
\min\limits_{u_{h_1}\in U_{ad},y_{h_1}\in V_{h_1}} J(y_{h_1},u_{h_1})={1\over
2}\|y_{h_1}-y_d\|_{0,\Omega}^2 +
\frac{\alpha}{2}\|u_{h_1}\|_{0,\Omega}^2
\end{eqnarray}
subject to
\begin{equation}\label{step_1_state}
a(y_{h_1},v_{h_1}) = (f+u_{h_1},v_{h_1}),\ \ \ \ \forall\ v_{h_1}\in V_{h_1}.
\end{equation}
The corresponding optimality condition reads:
Find $(u_{h_1},y_{h_1},p_{h_1})\in U_{ad}\times V_{h_1}\times V_{h_1}$ such that
\begin{equation}\label{step_1_OPT}
\left\{\begin{array}{llr}
a(y_{h_1},v_{h_1}) = (f+u_{h_1},v_{h_1}),\ \ &\forall\ v_{h_1}\in V_{h_1},\\
a(v_{h_1},p_{h_1}) = (y_{h_1}-y_d,v_{h_1}),\ \ &\forall\ v_{h_1}\in V_{h_1},\\
(\alpha u_{h_1}+p_{h_1},v-u_{h_1})\geq 0,\ \ &\forall\ v\in U_{ad}.
\end{array}
\right.
\end{equation}

\item Do $k=1$, $\cdots$, $n-1$
	
Obtain a new optimal solution $(u_{h_{k+1}},y_{h_{k+1}},p_{h_{k+1}})
\in U_{ad}\times V_{h_{k+1}}\times V_{h_{k+1}}$ by Algorithm \ref{Alg:3.1}
\begin{eqnarray*}
(u_{h_{k+1}},y_{h_{k+1}},p_{h_{k+1}})={\rm Correction}(V_H,u_{h_{k}},y_{h_k},p_{h_k},V_{h_{k+1}}).
\end{eqnarray*}
end Do
\end{enumerate}
Finally, we obtain a numerical approximation $(u_{h_n},y_{h_n}, p_{h_n})
\in U_{ad}\times V_{h_{n}}\times V_{h_n}$ for problem (\ref{OCP})-(\ref{OCP_state}).
\end{Algorithm}
Now we are in the position to give the error estimates for the solution generated by the above
 multilevel correction scheme described in Algorithm \ref{Alg:3.2}.
\begin{Theorem}\label{Thm:3.2}
Let $(u,y,p)\in U_{ad}\times H_0^1(\Omega)\times H_0^1(\Omega)$ and
 $(u_{h_n},y_{h_n},p_{h_n})\in U_{ad}\times V_{h_n}\times V_{h_n}$
 be the solution of problems (\ref{OCP})-(\ref{OCP_state})
 and the solution by Algorithm \ref{Alg:3.2}, respectively.
Assume the mesh size $H$ satisfies the condition  $CH\beta^2<1$.
Then the following error estimates hold
\begin{eqnarray}\label{multigrid_error}
\|\bar{u}_{h_n}-u_{h_n}\|_{0,\Omega}+\|\bar{y}_{h_n}-y_{h_n}\|_{0,\Omega}
+\|\bar{p}_{h_n}-p_{h_n}\|_{0,\Omega}\leq Ch_n^{2}.
\end{eqnarray}
Finally, we have the following error estimates
\begin{eqnarray}\label{final_error}
\|u-u_{h_n}\|_{0,\Omega}+\|y-y_{h_n}\|_{0,\Omega}+\|p-p_{h_n}\|_{0,\Omega}\leq
Ch_n^{2}.
\end{eqnarray}
\end{Theorem}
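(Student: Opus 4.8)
The plan is to derive the two displayed estimates in sequence, deducing everything from the single correction step analyzed in Theorem~\ref{Thm:3.1}. I first introduce the level-$k$ discrepancy between the finite element solution $(\bar u_{h_k},\bar y_{h_k},\bar p_{h_k})$ on $V_{h_k}$ and the correction output $(u_{h_k},y_{h_k},p_{h_k})$,
\[
e_k:=\|\bar u_{h_k}-u_{h_k}\|_{0,\Omega}+\|\bar y_{h_k}-y_{h_k}\|_{0,\Omega}+\|\bar p_{h_k}-p_{h_k}\|_{0,\Omega},
\]
which is precisely the quantity $\eta_{h_k}$ appearing in Theorem~\ref{Thm:3.1}. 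Estimate (\ref{multigrid_error}) is then exactly the assertion $e_n\leq Ch_n^2$, and once this is available (\ref{final_error}) follows in one line: by the triangle inequality and the a priori bound of Theorem~\ref{Thm:2.1} applied with $h=h_n$, $\|u-u_{h_n}\|_{0,\Omega}\leq\|u-\bar u_{h_n}\|_{0,\Omega}+\|\bar u_{h_n}-u_{h_n}\|_{0,\Omega}\leq Ch_n^2$, and likewise for the state and adjoint state. Hence all the work lies in bounding $e_n$.

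To control $e_n$ I would run an induction on $k$ driven by the recursion supplied by Theorem~\ref{Thm:3.1}, namely $e_{k+1}\leq CH(h_k^2+e_k)$. The base case is $e_1=0$: Step~1 of Algorithm~\ref{Alg:3.2} solves the discrete control problem (\ref{step_1})--(\ref{step_1_state}) exactly on $V_{h_1}$, so by uniqueness its output equals $(\bar u_{h_1},\bar y_{h_1},\bar p_{h_1})$. For the inductive step, assume $e_k\leq\tilde Ch_k^2$; inserting this into the recursion and using the refinement relation $h_k\approx\beta h_{k+1}$, hence $h_k^2\approx\beta^2h_{k+1}^2$, gives
\[
e_{k+1}\leq CH(h_k^2+e_k)\leq CH(1+\tilde C)h_k^2=CH(1+\tilde C)\beta^2h_{k+1}^2 .
\]
Writing $a:=CH\beta^2$, the induction closes with the \emph{same} constant as soon as $a(1+\tilde C)\leq\tilde C$, which is solved by $\tilde C:=a/(1-a)$ exactly when $a<1$. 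This is the hypothesis $CH\beta^2<1$, and it delivers $e_k\leq\tilde Ch_k^2$ for all $k$, in particular $e_n\leq Ch_n^2$, i.e. (\ref{multigrid_error}).

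The step I expect to be the crux is this inductive closure. Each correction gains a favorable factor $H$ (the coarse mesh size, produced by the Aubin--Nitsche duality argument inside Theorem~\ref{Thm:3.1}) but simultaneously loses a factor $\beta^2$ when the previous level's bound $h_k^2$ is re-expressed in terms of $h_{k+1}^2$. The smallness condition $CH\beta^2<1$ is exactly what renders the recursion $e_{k+1}\leq CH(h_k^2+e_k)$ contractive, so the level-by-level errors stay geometrically controlled rather than amplifying as $k$ grows; without it the constant $\tilde C$ would degenerate. After (\ref{multigrid_error}) is secured, (\ref{final_error}) costs only the triangle inequality together with Theorem~\ref{Thm:2.1}, so no additional estimates are needed.
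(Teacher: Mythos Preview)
Your proposal is correct and follows essentially the same approach as the paper: base case $e_1=0$, the recursion $e_{k+1}\leq CH(h_k^2+e_k)$ from Theorem~\ref{Thm:3.1}, contraction via $CH\beta^2<1$, and the triangle inequality with Theorem~\ref{Thm:2.1} for (\ref{final_error}). The only cosmetic difference is that the paper unrolls the recursion and sums the resulting geometric series $\sum_{k=1}^{n-1}(CH\beta^2)^{n-k}h_n^2$, whereas you close an induction with $\tilde C=a/(1-a)$; both routes produce the same constant.
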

\begin{proof}
Since we solve the optimal control problem directly in the first step of
Algorithm \ref{Alg:3.2}, we have the following estimates
\begin{eqnarray}\label{Estimate_h_1}
\|\bar{u}_{h_1}-u_{h_1}\|_{0,\Omega}+\|\bar{y}_{h_1}-y_{h_1}\|_{0,\Omega}+\|\bar{p}_{h_1}-p_{h_1}\|_{0,\Omega}=0.
\end{eqnarray}
From Theorem \ref{Thm:3.1} and its proof, the following estimates for $(u_{h_2}, y_{h_2}, p_{h_2})$
hold
\begin{eqnarray}\label{Estimate_h_2}
\|\bar{u}_{h_2}-u_{h_2}\|_{0,\Omega}+\|\bar{y}_{h_2}-y_{h_2}\|_{0,\Omega}
+\|\bar{p}_{h_2}-p_{h_2}\|_{0,\Omega}\leq CH h_1^{2}.
\end{eqnarray}
Then based on Theorem \ref{Thm:3.1}, the condition $CH\beta^2<1$
 and recursive argument, we have
\begin{eqnarray}
\eta_{h_n}&\leq& CH\big(h_{n-1}^{2}+\eta_{h_{n-1}}\big)\leq CH\big(h_{n-1}^{2}+CH(h_{n-2}^{2}+\eta_{h_{n-2}})\big)\nonumber\\
&\leq& \sum_{k=1}^{n-1}(CH)^{(n-k)}h_{k}^{2} \leq \Big(\sum_{k=1}^{n-1}(CH)^{(n-k)}\beta^{2(n-k)}\Big)h_{n}^{2} \nonumber\\
&=& \Big(\sum_{k=1}^{n-1}(CH\beta^2)^{(n-k)}\Big)h_{n}^{2}
\leq \frac{CH\beta^2}{1-(CH\beta^2)}h_n^{2}\leq Ch_n^{2}.
\end{eqnarray}
This is the desired result (\ref{multigrid_error}) and the estimate (\ref{final_error})
can be derived by combining (\ref{multigrid_error}) and (\ref{u_error}).
Then the proof is complete.
\end{proof}
Now, we come to analyze the computational work for the multilevel correction
scheme defined in Algorithm \ref{Alg:3.2}. Since the linear
boundary value problems (\ref{step_2}) and (\ref{step_3})
in Algorithm \ref{Alg:3.1} are solved by multigrid method,
the corresponding computational work is of optimal order.

We define the dimension of each level linear finite element space as
\begin{eqnarray*}
N_k := {\rm dim}\ V_{h_k},\ \ \ k=1,\cdots,n.
\end{eqnarray*}
Then the following relation holds
\begin{eqnarray}\label{relation_dimension}
N_k \thickapprox\Big(\frac{1}{\beta}\Big)^{d(n-k)}N_n,\ \ \ k=1,\cdots,n.
\end{eqnarray}

The estimate of computational work for the second step in Algorithm
\ref{Alg:3.1} is different from the linear eigenvalue problems
\cite{LinXie,Xie_IMA,Xie_JCP}.
In this step, we need to solve a constrained optimization problem (\ref{step_4_OPT}). Always, 
some types of optimization methods are used to solve this problem.
In each iteration step, we need to evaluate the orthogonal projection
in the finite element space $V_{H,h_k}$ ($k=2,\cdots,n$) onto $U_{ad}$ which needs work $\mathcal{O}(N_k)$.
Fortunately, this step always can be carried out in the parallel way. 
\begin{Theorem}\label{Thm:linear}
Assume that we solve Algorithm \ref{Alg:3.2} with $m$ processors parallely,
the optimization problem solving in the coarse spaces $V_{H,h_k}$ ($k=1,\cdots, n$)
and $V_{h_1}$ need work $\mathcal{O}(M_H)$ and $\mathcal{O}(M_{h_1})$, respectively, and
the work of multigrid method for solving the boundary value problems in $V_{h_k}$ is $\mathcal{O}(N_k)$
for $k=2,3,\cdots,n$. Let $\varpi$ denote the iteration number of the optimization algorithm when we solve 
the optimization problem (\ref{step_4_OPT}) in the coarse space.
Then in each computational processor, the work involved
in Algorithm \ref{Alg:3.2} has the following estimate
\begin{eqnarray}\label{Computation_Work_Estimate}
{\rm Total\ work}&=&\mathcal{O}\Big(\big(1+\frac{\varpi}{m}\big)N_n
+ M_H\log N_n+M_{h_1}\Big).
\end{eqnarray}
\end{Theorem}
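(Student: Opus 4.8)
The plan is to add up the work of the two stages of Algorithm \ref{Alg:3.2}, treating each correction step of Algorithm \ref{Alg:3.1} level by level, and then to collapse the resulting sums using the geometric decay of the dimensions in (\ref{relation_dimension}) together with the logarithmic dependence of the number of levels on $N_n$. First I would record the cost of Step (1) of Algorithm \ref{Alg:3.2}: the single optimization solve posed in $V_{h_1}$, which by hypothesis costs $\mathcal{O}(M_{h_1})$ and contributes exactly one such term to the total.

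Next I would dissect the cost of the correction step producing the level-$k$ approximation, for each $k=2,\cdots,n$, into three parts. The two linear boundary value problems (\ref{step_2}) and (\ref{step_3}) are solved by multigrid and contribute $\mathcal{O}(N_k)$. The optimization problem (\ref{step_4_OPT}) is posed on $V_{H,h_k}$, whose dimension equals $\dim V_H+2$ and is therefore independent of $k$; the linear-algebra work of its state and adjoint solves, accumulated over the $\varpi$ iterations, is $\mathcal{O}(M_H)$ by hypothesis. The only ingredient of this optimization that scales with the fine mesh is the per-iteration evaluation of the projection $P_{U_{ad}}$ (cf. (\ref{p_to_u})) together with the associated fine-grid inner products, costing $\mathcal{O}(N_k)$ per iteration; over $\varpi$ iterations distributed among $m$ processors this amounts to $\frac{\varpi}{m}\mathcal{O}(N_k)$.

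Then I would sum these contributions over $k=2,\cdots,n$. Using (\ref{relation_dimension}), the dimension sum is a convergent geometric series since $\beta\geq 2$ and $d\geq 2$,
\[
\sum_{k=2}^{n}N_k\thickapprox\sum_{k=2}^{n}\Big(\frac{1}{\beta}\Big)^{d(n-k)}N_n\leq\frac{N_n}{1-(1/\beta)^{d}}=\mathcal{O}(N_n),
\]
so the multigrid and projection parts collapse to $\big(1+\frac{\varpi}{m}\big)\mathcal{O}(N_n)$. The $n-1$ coarse-space optimization solves each cost $\mathcal{O}(M_H)$; since (\ref{relation_dimension}) gives $N_n\thickapprox\beta^{d(n-1)}N_1$ and hence $n=\mathcal{O}(\log N_n)$, their sum is $\mathcal{O}(M_H\log N_n)$. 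Adding back the $\mathcal{O}(M_{h_1})$ from Step (1) yields precisely (\ref{Computation_Work_Estimate}).

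The main point requiring care, rather than a genuine analytic obstacle, is the cost model for the optimization step: one must argue that the state and adjoint solves of (\ref{step_4_OPT}) stay in the small $(\dim V_H+2)$-dimensional space $V_{H,h_k}$ and thus cost only $\mathcal{O}(M_H)$, while the sole $N_k$-sized operation per iteration is the projection onto $U_{ad}$. This separation is exactly what keeps the factor $\varpi$ out of the $M_H\log N_n$ term and attaches it only to the parallelizable $N_n$ term, and it is the heart of the claimed almost-optimal complexity.
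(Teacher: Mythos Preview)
Your proof is correct and follows essentially the same approach as the paper: you break each correction step into the multigrid cost $\mathcal{O}(N_k)$, the coarse-space optimization cost $\mathcal{O}(M_H)$, and the parallelizable projection cost $\varpi N_k/m$, then sum over levels using the geometric decay (\ref{relation_dimension}) and the bound $n=\mathcal{O}(\log N_n)$. Your discussion of why the $\varpi$ factor attaches only to the $N_n$ term (and not to $M_H$) is more explicit than the paper's, but the argument is the same.
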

\begin{proof}
In each computational processor, let $W_k$ denote the computational work for the correction step in  the $k$-th finite element space $V_{h_k}$.
Then from the description of Algorithm  \ref{Alg:3.1} we have
\begin{eqnarray}\label{work_k}
W_k&=&\mathcal{O}\left(N_k +M_H+\varpi\frac{N_k}{m}\right), \ \ \ \ {\rm for}\  k=2,\cdots,n.
\end{eqnarray}
Iterating (\ref{work_k}) and using (\ref{relation_dimension}), we obtain
\begin{eqnarray}\label{Work_Estimate}
\text{Total work} &=& \sum_{k=1}^nW_k\nonumber =
\mathcal{O}\left(M_{h_1}+\sum_{k=2}^n
\Big(N_k + M_H+\varpi\frac{N_k}{m}\Big)\right)\nonumber\\
&=& \mathcal{O}\Big(\sum_{k=2}^n\Big(1+\frac{\varpi}{m}\Big)N_k
+ (n-1) M_H + M_{h_1}\Big)\nonumber\\
&=& \mathcal{O}\left(\sum_{k=2}^n
\Big(\frac{1}{\beta}\Big)^{d(n-k)}\Big(1+\frac{\varpi}{m}\Big)N_n
+ M_H\log N_n+M_{h_1}\right)\nonumber\\
&=& \mathcal{O}\left(\big(1+\frac{\varpi}{m}\big)N_n
+ M_H\log N_n+M_{h_1}\right).
\end{eqnarray}
This is the desired result and we complete the proof.
\end{proof}
\begin{Remark}
Since we have a good enough initial solution $(\hat{y}_{h_{k+1}},\hat p_{h_{k+1}})$
in the second step of Algorithm \ref{Alg:3.1},
solving the optimization problem (\ref{step_4_OPT}) always does not
need too many iterations. Then the complexity in each computational
node is always $\mathcal{O}(N_n)$ provided $M_H\ll N_n$ and $M_{h_1}\leq N_n$.
\end{Remark}

\section{Application to optimal controls of semilinear elliptic equation}\setcounter{equation}{0}
In this section, we will extend the multilevel correction method
to optimal control problem governed by semilinear elliptic equation:
\begin{eqnarray}\label{OCP_nonlinear}
\min\limits_{u\in U_{ad}}\ \ J(y,u)={1\over
2}\|y-y_d\|_{0,\Omega}^2 +
\frac{\alpha}{2}\|u\|_{0,\Omega}^2
\end{eqnarray}
subject to
\begin{equation}\label{OCP_state_nonlinear}
\left\{\begin{array}{llr} -\Delta y+\phi(\cdot,y)=f+u \ \ &\mbox{in}\
\Omega, \\
\ \ \ \ \ \ \ \ \ \ \ \ \ \ y=0  \ \ \ &\mbox{on}\ \partial\Omega,
\end{array}
\right.
\end{equation}
where the function $\phi:\Omega\times\mathbb{R}\rightarrow \mathbb{R}$ is measurable with respect to $x\in \Omega$ for all $y\in \mathbb{R}$ and is of class $\mathcal{C}^2$ with respect to $y$, its 
 first derivative with respect to $y$, denoted by $\phi'$ in this paper, is nonnegative for all 
 $x\in \Omega$ and $y\in \mathbb{R}$. In the following, we will omit the first argument of 
 $\phi(\cdot,y)$ and denote it by $\phi(y)$. For all $M>0$, we assume that there exists $C_M>0$
such that
\begin{eqnarray}
|\phi''(y_1)-\phi''(y_2)|\leq C_M|y_1-y_2|\nonumber
\end{eqnarray}
for all $(y_1,y_2)\in [-M,M]^2$.

It is well-known that the state equation (\ref{OCP_state_nonlinear}) admits a unique 
solution $y\in H_0^1(\Omega)\cap L^\infty(\Omega)$ under the aforementioned conditions (see \cite{Arada}).
 Moreover, we have $y\in H_0^1(\Omega)\cap H^2(\Omega)$. Then we are able to introduce the 
 control-to-state mapping $G: L^2(\Omega)\rightarrow H_0^1(\Omega)\cap L^\infty(\Omega)$, 
 which leads to the reduced optimization problem
\begin{eqnarray}
\min\limits_{u\in U_{ad}}\ \ \hat J(u):= J(G(u),u).\label{reduced_semilinear}
\end{eqnarray}
Similar to the linear case, it is easy to prove the existence of a solution to
(\ref{OCP_nonlinear})-(\ref{OCP_state_nonlinear}), see, e.g., \cite{Arada}.
However, the uniqueness is generally not guaranteed. We can also derive the
 first order necessary optimality conditions as
\begin{eqnarray}\label{OCP_OPT_nonlinear}
\hat J'(u)(v-u)=(\alpha u+p,v-u)\geq 0,\ \ \ \forall v\in U_{ad},
\end{eqnarray}
 where the adjoint state $p\in H_0^1(\Omega)$ satisfies
\begin{equation}\label{adjoint_nonlinear}
\left\{\begin{array}{llr}
-\Delta p+\phi'(y)p=y-y_d \ \ &\mbox{in}\
\Omega,\\
 \ \ \ \ \ \ \ \ \ \ \ \ \ \ \ \ \ p=0  \ \ \ &\mbox{on}\ \partial\Omega.
\end{array} \right.
\end{equation}

Moreover, we assume the following second order sufficient optimality condition.
\begin{Assumption}\label{Ass:ssc}
Let $u\in U_{ad}$ fulfil the first order necessary optimality conditions (\ref{OCP_OPT_nonlinear}). 
We assume that there exists a constant $\gamma >0$ such that
\begin{eqnarray}
\hat J''(u)(v,v)\geq \gamma \|v\|_{0,\Omega}^2,\ \ \ \ \forall v\in L^2(\Omega).\nonumber
\end{eqnarray}
\end{Assumption}
We note that Assumption \ref{Ass:ssc} is a rather strong second order sufficient optimality 
condition compared to the one presented in \cite{Arada}, it is commonly used in the error  
estimates of nonlinear optimal control problems (see \cite{Kroner} and \cite{Neitzel}). 
For $u\in U_{ad}$ and $v_1,v_2\in L^2(\Omega)$, the second order derivative of $\hat J$ 
is given by (see \cite{Arada} and \cite{Hinze09book})
\begin{eqnarray}
\hat J''(u)(v_1,v_2)=\int_\Omega(\alpha v_1v_2+ \tilde y_1\tilde y_2-p\phi''(y)\tilde y_1\tilde y_2)dx,\nonumber
\end{eqnarray}
where $y=G(u)$, $\tilde y_i=G'(u)v_i$, $i=1,2$. Now we can show that the second order derivative of 
$\hat J$ is Lipschitz continuous in $L^2(\Omega)$ .
\begin{Lemma}\label{La:4.2}
There exists a constant $C$ such that for all $u_1,u_2\in U_{ad}$ and $v\in L^2(\Omega)$
\begin{eqnarray}
|\hat J''(u_1)(v,v)-\hat J'' (u_2)(v,v)|\leq C\|u_1-u_2\|_{0,\Omega}\|v\|_{0,\Omega}^2\nonumber
\end{eqnarray}
holds.
\end{Lemma}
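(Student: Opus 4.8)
The plan is to work directly from the explicit formula for the second derivative, specialised to $v_1=v_2=v$, namely $\hat J''(u)(v,v)=\int_\Omega(\alpha v^2+\tilde y^2-p\,\phi''(y)\,\tilde y^2)\,dx$ with $y=G(u)$ and $\tilde y=G'(u)v$. Forming the difference $\hat J''(u_1)(v,v)-\hat J''(u_2)(v,v)$, the term $\alpha v^2$ cancels, so it suffices to control the two quantities $\int_\Omega(\tilde y_1^2-\tilde y_2^2)\,dx$ and $\int_\Omega(p_1\phi''(y_1)\tilde y_1^2-p_2\phi''(y_2)\tilde y_2^2)\,dx$, where I abbreviate $y_i=G(u_i)$, $p_i$ for the corresponding adjoint state, and $\tilde y_i=G'(u_i)v$.

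Before estimating these, I would collect the needed stability bounds. Since $U_{ad}$ consists of $L^\infty$-bounded controls, the states $y_i$ and adjoints $p_i$ are uniformly bounded in $H^2(\Omega)\hookrightarrow L^\infty(\Omega)$, so both $\phi''(y_i)$ and $p_i$ are bounded pointwise by a constant depending only on $U_{ad}$. Testing the difference of the state equations with $y_1-y_2$ and using the monotonicity $\phi'\ge 0$ gives $\|y_1-y_2\|_{1,\Omega}\le C\|u_1-u_2\|_{0,\Omega}$; the analogous energy argument for the adjoint equations, together with the Lipschitz continuity of $\phi'$ on the bounded range of the states, yields $\|p_1-p_2\|_{1,\Omega}\le C\|u_1-u_2\|_{0,\Omega}$. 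The linearised state solves $-\Delta\tilde y_i+\phi'(y_i)\tilde y_i=v$, so a standard estimate gives $\|\tilde y_i\|_{1,\Omega}\le C\|v\|_{0,\Omega}$; moreover the difference $w:=\tilde y_1-\tilde y_2$ satisfies $-\Delta w+\phi'(y_1)w=-(\phi'(y_1)-\phi'(y_2))\tilde y_2$, and testing with $w$, using $|\phi'(y_1)-\phi'(y_2)|\le C|y_1-y_2|$ together with the embedding $H^1\hookrightarrow L^6$ (valid for $d\le 3$) and Hölder's inequality, produces the key bound $\|\tilde y_1-\tilde y_2\|_{1,\Omega}\le C\|u_1-u_2\|_{0,\Omega}\|v\|_{0,\Omega}$.

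With these in hand the first quantity is immediate: writing $\tilde y_1^2-\tilde y_2^2=(\tilde y_1-\tilde y_2)(\tilde y_1+\tilde y_2)$ and applying Cauchy--Schwarz gives a bound $C\|u_1-u_2\|_{0,\Omega}\|v\|_{0,\Omega}^2$. For the second quantity I would insert the telescoping decomposition
$$p_1\phi''(y_1)\tilde y_1^2-p_2\phi''(y_2)\tilde y_2^2=(p_1-p_2)\phi''(y_1)\tilde y_1^2+p_2(\phi''(y_1)-\phi''(y_2))\tilde y_1^2+p_2\phi''(y_2)(\tilde y_1^2-\tilde y_2^2)$$
and bound the three integrals separately. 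The first uses $\|p_1-p_2\|_{1,\Omega}\le C\|u_1-u_2\|_{0,\Omega}$ with a Hölder split into three $L^3$ factors via $H^1\hookrightarrow L^3$; the second invokes precisely the hypothesis $|\phi''(y_1)-\phi''(y_2)|\le C_M|y_1-y_2|$ together with the $H^1$-bound on $y_1-y_2$ and the uniform boundedness of $p_2$; the third reuses the factorisation $\tilde y_1^2-\tilde y_2^2=(\tilde y_1-\tilde y_2)(\tilde y_1+\tilde y_2)$ with $p_2\phi''(y_2)$ bounded in $L^\infty$. Each term contributes $C\|u_1-u_2\|_{0,\Omega}\|v\|_{0,\Omega}^2$, and summing yields the claim.

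I expect the main obstacle to be the estimate for $\|\tilde y_1-\tilde y_2\|_{1,\Omega}$, since this is where the dependence of the linearised operator $G'(u)$ on $u$ must be quantified; the crux is to absorb a product of three $H^1$-functions through the Sobolev embedding without losing the single factor $\|v\|_{0,\Omega}$ needed to keep the bound quadratic in $v$. By contrast, the Lipschitz hypothesis on $\phi''$ enters only in the middle telescoping term, where it is exactly what is required.
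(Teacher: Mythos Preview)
Your proposal is correct and follows essentially the same route as the paper: the same telescoping decomposition of $p_1\phi''(y_1)\tilde y_1^2-p_2\phi''(y_2)\tilde y_2^2$ and the same factorisation $\tilde y_1^2-\tilde y_2^2=(\tilde y_1-\tilde y_2)(\tilde y_1+\tilde y_2)$, combined with Lipschitz stability of $G$, $G'$ and the adjoint map. The only cosmetic differences are that the paper quotes the needed Lipschitz estimates (in particular $\|G'(u_1)v-G'(u_2)v\|_{0,\Omega}\le C\|u_1-u_2\|_{0,\Omega}\|v\|_{0,\Omega}$) from \cite{Arada} rather than deriving them, and it uses the embedding $H^1\hookrightarrow L^4$ together with $p_2\in L^\infty$ in place of your $H^1\hookrightarrow L^3$/$L^6$ splits.
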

\begin{proof}
Let $y_i=G(u_i)$, $\tilde y_i=G'(u_i)v$, $i=1,2$, $p_1$ be the adjoint state associated with $u_1$ and
$p_2$ be the adjoint state associated with $u_2$. Then from the definition of the second
order derivative of $\hat J$ we have
\begin{eqnarray*}
|\hat J''(u_1)(v,v)-\hat J'' (u_2)(v,v)|&=&\Big|\int_\Omega(\tilde y_1^2-\tilde y_2^2+p_2\phi''(y_2)
\tilde y_2^2-p_1\phi''(y_1)\tilde y_1^2)dx\Big|\nonumber\\
&\leq&\int_\Omega|(\tilde y_1+\tilde y_2)(\tilde y_1-\tilde y_2)+(p_2-p_1)\phi''(y_1)\tilde y_1^2\nonumber\\
&&-p_2\phi''(y_2)(\tilde y_1^2-\tilde y_2^2)-p_2(\phi''(y_1)-\phi''(y_2))\tilde y_1^2|dx. 
\end{eqnarray*}
This gives
\begin{eqnarray}
&&|\hat J''(u_1)(v,v)-\hat J'' (u_2)(v,v)|\nonumber\\
&\leq&(\|\tilde y_1\|_{0,\Omega}+\|\tilde y_2\|_{0,\Omega})\|\tilde y_1-\tilde y_2\|_{0,\Omega}
+c\|\phi''(y_1)\|_{0,\infty,\Omega}\|p_2-p_1\|_{0,\Omega}\|\tilde y_1\|^2_{0,4,\Omega}\nonumber\\
&&+c\|p_2\|_{0,\infty,\Omega}\big(\|\phi''(y_2)\|_{0,\infty,\Omega}(\|\tilde y_1\|_{0,\Omega}
+\|\tilde y_2\|_{0,\Omega})\|\tilde y_1-\tilde y_2\|_{0,\Omega}+\|y_1-y_2\|_{0,\Omega}\|\tilde y_1\|^2_{0,4,\Omega}\big).\nonumber
\end{eqnarray}
It has been proved in \cite{Arada} that
\begin{eqnarray}
\|G(u)\|_{1,\Omega}\leq C\|u\|_{0,\Omega},\nonumber\\
\|G(u_1)-G(u_2)\|_{0,\Omega}\leq C\|u_1-u_2\|_{0,\Omega},\nonumber\\
\|G'(u_1)v-G'(u_2)v\|_{0,\Omega}\leq C\|u_1-u_2\|_{0,\Omega}\|v\|_{0,\Omega}.\nonumber
\end{eqnarray}
Using the boundedness of $U_{ad}$ and $\phi''(\cdot)$, the embedding
 $H^1(\Omega)\hookrightarrow L^4(\Omega)$, we can obtain the desired result.
 \end{proof}
\begin{Lemma}\label{La:4.3}
Let $u$ satisfy Assumption \ref{Ass:ssc}. Then there exists an $\epsilon>0$ such that
\begin{eqnarray}
\hat J''(w)(v,v)\geq \frac{\gamma}{2}\|v\|_{0,\Omega}^2\nonumber
\end{eqnarray}
holds for all $v\in L^2(\Omega)$ and $w\in U_{ad}$ with $\|w-u\|_{0,\Omega}\leq\epsilon$.
\end{Lemma}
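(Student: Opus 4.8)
The plan is to prove Lemma \ref{La:4.3} by a direct perturbation argument that reduces the statement to the coercivity hypothesis of Assumption \ref{Ass:ssc} combined with the Lipschitz estimate just established in Lemma \ref{La:4.2}. The key observation is that Assumption \ref{Ass:ssc} gives control of $\hat J''$ at the single point $u$, while Lemma \ref{La:4.2} controls how far $\hat J''(w)$ can drift from $\hat J''(u)$ as $w$ moves away from $u$; so the whole task is to choose $\epsilon$ small enough that the drift consumes at most half of the available coercivity constant $\gamma$.

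Concretely, for any $w\in U_{ad}$ and any $v\in L^2(\Omega)$ I would write the trivial decomposition
\begin{eqnarray*}
\hat J''(w)(v,v)=\hat J''(u)(v,v)-\big(\hat J''(u)(v,v)-\hat J''(w)(v,v)\big).
\end{eqnarray*}
The first term is bounded below by $\gamma\|v\|_{0,\Omega}^2$ using Assumption \ref{Ass:ssc}. For the second term, applying Lemma \ref{La:4.2} with $u_1=u$ and $u_2=w$ gives
\begin{eqnarray*}
\big|\hat J''(u)(v,v)-\hat J''(w)(v,v)\big|\leq C\|u-w\|_{0,\Omega}\|v\|_{0,\Omega}^2\leq C\epsilon\|v\|_{0,\Omega}^2,
\end{eqnarray*}
whenever $\|w-u\|_{0,\Omega}\leq\epsilon$, where $C$ is the constant furnished by Lemma \ref{La:4.2}. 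Combining the two bounds yields $\hat J''(w)(v,v)\geq(\gamma-C\epsilon)\|v\|_{0,\Omega}^2$.

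The final step is simply to fix the threshold: choosing $\epsilon:=\gamma/(2C)$ (or any smaller positive number) forces $\gamma-C\epsilon\geq\gamma/2$, so that $\hat J''(w)(v,v)\geq\frac{\gamma}{2}\|v\|_{0,\Omega}^2$ for all $v\in L^2(\Omega)$ and all admissible $w$ in the $\epsilon$-ball around $u$, which is exactly the assertion. I do not anticipate a genuine obstacle here, since both ingredients are already in hand; the only point requiring care is bookkeeping of the constant, namely ensuring that the $C$ used in the definition of $\epsilon$ is literally the Lipschitz constant of Lemma \ref{La:4.2} so that the arithmetic $\gamma-C\epsilon=\gamma/2$ is valid, and that $\epsilon$ is chosen after $\gamma$ and $C$ are fixed so there is no circularity. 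This is the standard openness-of-coercivity argument, and the proof is short.
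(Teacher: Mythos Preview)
Your proposal is correct and matches the paper's own proof essentially line for line: the paper also writes $\hat J''(w)(v,v)=\hat J''(u)(v,v)+\big(\hat J''(w)(v,v)-\hat J''(u)(v,v)\big)$, applies Assumption~\ref{Ass:ssc} to the first term and Lemma~\ref{La:4.2} to the second, and then chooses $\epsilon<\gamma/(2c)$.
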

\begin{proof}
From Assumption \ref{Ass:ssc} and Lemma \ref{La:4.2} we have
\begin{eqnarray}
\hat J''(w)(v,v)&=&\hat J''(u)(v,v)+\hat J''(w)(v,v)-\hat J''(u)(v,v)\nonumber\\
&\geq&\gamma\|v\|_{0,\Omega}^2-c\|w-u\|_{0,\Omega}\|v\|_{0,\Omega}^2\nonumber\\
&\geq&{\gamma\over 2}\|v\|_{0,\Omega}^2\nonumber
\end{eqnarray}
with $\epsilon <{{\gamma}\over{2c} }$. This gives the result.
\end{proof}

With this estimate at hand we can prove the local convexity of the objective functional.
\begin{Lemma}\label{La:4.4}
Let $u\in U_{ad}$ satisfy the first order necessary optimality condition (\ref{OCP_OPT_nonlinear}) 
and Assumption \ref{Ass:ssc}. Then there exist constants $\epsilon >0$ and $\gamma>0$ such
 that for all $v\in U_{ad}$ and $w\in U_{ad}$ satisfying $\|v-u\|_{0,\Omega}\leq \epsilon$ 
 and  $\|w-u\|_{0,\Omega}\leq \epsilon$, there holds
\begin{eqnarray}
{\gamma\over 2}\|v-w\|_{0,\Omega}^2\leq (\hat J'(v)-\hat J'(w),v-w).\nonumber
\end{eqnarray}
\end{Lemma}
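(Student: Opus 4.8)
The plan is to represent the right-hand side as an integral of the second derivative of $\hat J$ along the segment joining $w$ and $v$, and then to invoke the uniform coercivity of $\hat J''$ supplied by Lemma \ref{La:4.3}. First I would fix $v,w\in U_{ad}$ with $\|v-u\|_{0,\Omega}\leq\epsilon$ and $\|w-u\|_{0,\Omega}\leq\epsilon$, where $\epsilon$ is the constant from Lemma \ref{La:4.3}, and introduce the path $w_t:=w+t(v-w)$ for $t\in[0,1]$. Since $U_{ad}$ is convex, $w_t\in U_{ad}$; and since the ball of radius $\epsilon$ about $u$ is convex,
\[
\|w_t-u\|_{0,\Omega}=\|(1-t)(w-u)+t(v-u)\|_{0,\Omega}\leq (1-t)\|w-u\|_{0,\Omega}+t\|v-u\|_{0,\Omega}\leq\epsilon,
\]
so each $w_t$ lies in the neighbourhood on which Lemma \ref{La:4.3} is valid.

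Next, because $\phi$ is of class $\mathcal{C}^2$ and the control-to-state map $G$ is correspondingly smooth, the reduced functional $\hat J$ is twice continuously differentiable, so the scalar function $t\mapsto (\hat J'(w_t),v-w)$ is $C^1$ with derivative $\hat J''(w_t)(v-w,v-w)$. The fundamental theorem of calculus then gives
\[
(\hat J'(v)-\hat J'(w),v-w)=\int_0^1\hat J''(w_t)(v-w,v-w)\,dt.
\]
Applying Lemma \ref{La:4.3} pointwise in $t$ with the test direction $v-w\in L^2(\Omega)$ yields $\hat J''(w_t)(v-w,v-w)\geq\frac{\gamma}{2}\|v-w\|_{0,\Omega}^2$ for every $t\in[0,1]$, and integrating this over $[0,1]$ produces exactly the claimed inequality.

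The only delicate point is the fundamental-theorem-of-calculus identity for $\hat J'$: one must ensure that $\hat J$ is genuinely $C^2$ on a neighbourhood of the whole segment, so that the integrand is legitimately the derivative of $t\mapsto(\hat J'(w_t),v-w)$. This is guaranteed by the regularity hypotheses on $\phi$ together with the smoothness and Lipschitz bounds on $G$ and $G'$ already recorded in the proof of Lemma \ref{La:4.2}. Everything else is the convexity bookkeeping that keeps $w_t$ admissible and inside the $\epsilon$-ball, which is what makes the pointwise application of Lemma \ref{La:4.3} permissible.
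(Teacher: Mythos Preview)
Your argument is correct and essentially matches the paper's proof: the paper uses the mean value theorem to write $(\hat J'(v)-\hat J'(w),v-w)=\hat J''(\theta v+(1-\theta)w)(v-w,v-w)$ for some $\theta\in[0,1]$ and then applies Lemma~\ref{La:4.3}, whereas you use the equivalent integral form along the segment; in both cases the convexity of $U_{ad}$ and of the $\epsilon$-ball keep the intermediate point admissible for Lemma~\ref{La:4.3}.
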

\begin{proof}
We can conclude from Lemma \ref{La:4.3} that for some $\theta\in [0,1]$
\begin{eqnarray}
(\hat J'(v)-\hat J'(w),v-w)&=&\hat J''(\theta v+(1-\theta)w)(v-w,v-w)\nonumber\\
&\geq &{\gamma\over 2}\|v-w\|_{0,\Omega}^2,\nonumber
\end{eqnarray}
this gives the desired result.
\end{proof}

Now we are ready to define the finite dimensional approximation to the optimal
 control problem (\ref{OCP_nonlinear})-(\ref{OCP_state_nonlinear}):
\begin{eqnarray}\label{OCP_nonlinear_h}
\min\limits_{\bar u_h\in U_{ad}}J_h(\bar y_h,\bar u_h)={1\over
2}\|\bar y_h-y_d\|_{0,\Omega}^2 +
\frac{\alpha}{2}\|\bar u_h\|_{0,\Omega}^2
\end{eqnarray}
subject to
\begin{eqnarray}
a(\bar y_h,v_h)+(\phi(\bar y_h),v_h)=(f+\bar u_h,v_h),\ \ \ \ \forall v_h\in V_h.
\label{OCP_state_nonlinear_h}
\end{eqnarray}
Similar to the continuous case, we can define a discrete control-to-state mapping 
$G_h:L^2(\Omega)\rightarrow V_h$ and formulate a reduced discretised optimization problem
\begin{eqnarray}
\min\limits_{u_h\in U_{ad}}\hat J_h(u_h):=J_h(G_h(u_h),u_h).\nonumber
\end{eqnarray}
The above discretised optimization problem admits at least one solution.
The discretised first order necessary optimality condition can be stated as follows:
\begin{equation}\label{OCP_OPT_nonlinear_h}
\left\{\begin{array}{llr}a(\bar y_h,v_h)+(\phi(\bar y_h),v_h)
=(f+\bar u_h,v_h),\ \ &\forall v_h\in V_h,\\
 a(w_h,\bar p_h)+(\phi'(\bar y_h)\bar p_h,v_h)
 =(\bar y_h-y_d,w_h),\ \ &\forall w_h\in V_h,\\
 (\alpha \bar u_h+\bar p_h,v_h-\bar u_h)\geq 0, \ \ &\forall v_h\in U_{ad}.
\end{array} \right.
\end{equation}
Similar to the proof in \cite{Arada} we can prove the following a priori error estimates
\begin{Lemma}\label{La:4.6}
For $u,v\in L^2(\Omega)$, assume that $G(u)\in H_0^1(\Omega)$ and $G_h(u)\in V_h$ be the solutions 
of the continuous and discretised state equation,  $G'(u)v\in H_0^1(\Omega)$ and $G_h'(u)v\in V_h$ 
be the solutions of the continuous and discretised linearized state equation, respectively.
 Then the following error estimates hold
\begin{eqnarray}
\|G(u)-G_h(u)\|_{0,\Omega}+h\|G(u)-G_h(u)\|_{1,\Omega}\leq Ch^2\|u\|_{0,\Omega},\label{non_error_1}\\
\|G'(u)v-G_h'(u)v\|_{0,\Omega}+h\|G'(u)v-G_h'(u)v\|_{1,\Omega}\leq Ch^{2}\|v\|_{0,\Omega}.\label{non_error_2}
\end{eqnarray}
\end{Lemma}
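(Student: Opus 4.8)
The plan is to prove the two error estimates in Lemma \ref{La:4.6} separately but by the same circle of ideas, treating (\ref{non_error_1}) first and then using it to handle (\ref{non_error_2}). For (\ref{non_error_1}), write $y=G(u)$ and $y_h=G_h(u)$, so that $y$ and $y_h$ solve the continuous and discretised semilinear state equations with the same right-hand side $f+u$. I would introduce the standard Ritz/auxiliary projection $R_h y\in V_h$ defined by $a(R_h y,v_h)=a(y,v_h)$ for all $v_h\in V_h$, for which the classical finite element estimates give $\|y-R_h y\|_{0,\Omega}+h\|y-R_h y\|_{1,\Omega}\le Ch^2\|y\|_{2,\Omega}$. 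Subtracting the discrete equation from the continuous one tested against $v_h\in V_h$ yields the Galerkin orthogonality relation $a(y-y_h,v_h)+(\phi(y)-\phi(y_h),v_h)=0$, and the nonlinear term is where the real work sits.

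The key structural fact to exploit is monotonicity: since $\phi'\ge 0$, the map $y\mapsto\phi(y)$ is monotone, so by the mean value theorem $\phi(y)-\phi(y_h)=\phi'(\xi)(y-y_h)$ with $\phi'(\xi)\ge 0$. First I would obtain the $H^1$ estimate by testing the error equation with $v_h=R_h y-y_h$, splitting $y-y_h=(y-R_h y)+(R_h y-y_h)$, and using coercivity of $a$ together with the sign of the nonlinear term to absorb the monotone contribution; combined with the projection bound this gives $\|y-y_h\|_{1,\Omega}\le Ch\|y\|_{2,\Omega}$, which together with the elliptic regularity bound $\|y\|_{2,\Omega}\le C\|u\|_{0,\Omega}$ (from \cite{Arada}) controls the $h\|\cdot\|_{1,\Omega}$ term in (\ref{non_error_1}). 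To upgrade the $L^2$ rate from $h$ to $h^2$ I would run an Aubin--Nitsche duality argument: let $\psi$ solve the adjoint problem $-\Delta\psi+\phi'(\eta)\psi=y-y_h$ with homogeneous boundary data (for a suitable intermediate $\eta$ coming from the mean value form of $\phi(y)-\phi(y_h)$), use elliptic regularity $\|\psi\|_{2,\Omega}\le C\|y-y_h\|_{0,\Omega}$, test against the error, and insert the Galerkin orthogonality to replace $\psi$ by $\psi-R_h\psi$. Each factor then carries one power of $h$, yielding $\|y-y_h\|_{0,\Omega}\le Ch^2\|u\|_{0,\Omega}$.

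For (\ref{non_error_2}), set $z=G'(u)v$ and $z_h=G_h'(u)v$; differentiating the state equation shows these solve the \emph{linear} problems $-\Delta z+\phi'(y)z=v$ and its discrete analogue with $\phi'(y_h)z_h$, tested in $H_0^1(\Omega)$ and $V_h$ respectively. The complication relative to (\ref{non_error_1}) is that the two linearized equations use different coefficients, $\phi'(y)$ versus $\phi'(y_h)$. I would handle this by inserting an intermediate discrete solution $\tilde z_h\in V_h$ of the linearized equation with the \emph{continuous} coefficient $\phi'(y)$, and estimate $z-z_h$ as $(z-\tilde z_h)+(\tilde z_h-z_h)$. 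The first difference is a standard linear elliptic finite element error, handled exactly as in (\ref{non_error_1}) via Ce\'a and Aubin--Nitsche, giving $O(h^2)\|v\|_{0,\Omega}$ once one bounds $\|z\|_{2,\Omega}\le C\|v\|_{0,\Omega}$. The second difference solves a discrete equation whose right-hand-side defect is $((\phi'(y_h)-\phi'(y))z_h,\cdot)$; here I would use the Lipschitz bound on $\phi'$ on the bounded range $[-M,M]$ (available since $y,y_h\in L^\infty$), the already-proven state estimate $\|y-y_h\|_{0,\Omega}\le Ch^2\|u\|_{0,\Omega}$, the boundedness of $U_{ad}$, and the $L^4$--$L^4$--$L^2$ H\"older/Sobolev estimates exactly as in the proof of Lemma \ref{La:4.2} to bound this defect by $Ch^2\|v\|_{0,\Omega}$. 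The main obstacle throughout is the careful treatment of the nonlinear terms: keeping the monotonicity sign in the $H^1$ estimate and correctly routing the coefficient-difference $\phi'(y)-\phi'(y_h)$ through the duality argument so that no factor of $h$ is lost, which is what makes the quadratic rate survive.
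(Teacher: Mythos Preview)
The paper does not actually supply its own proof of this lemma; it simply states the result with the remark ``Similar to the proof in \cite{Arada} we can prove the following a priori error estimates'' and moves on. Your proposed argument---Ritz projection plus monotonicity of $\phi$ for the $H^1$ bound, an Aubin--Nitsche duality for the $L^2$ upgrade, and for (\ref{non_error_2}) an intermediate discrete solution to isolate the coefficient perturbation $\phi'(y)-\phi'(y_h)$---is precisely the standard route taken in \cite{Arada}, so your approach is correct and in line with what the paper defers to.

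One small point worth tightening: your regularity step gives $\|y\|_{2,\Omega}\le C\|f+u\|_{0,\Omega}$ rather than $C\|u\|_{0,\Omega}$, so to recover the lemma exactly as stated you must either absorb the dependence on the fixed datum $f$ into the constant $C$ (which is how \cite{Arada} proceeds, since $u$ ranges over the bounded set $U_{ad}$), or note that the right-hand side should really read $C h^2(1+\|u\|_{0,\Omega})$.
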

Now we can formulate the following coercivity of the second order derivative of the discrete reduced objective functional
\begin{Lemma}\label{La:4.5}
Let $u$ satisfy Assumption \ref{Ass:ssc}. Then there exists an $\epsilon>0$ such that
\begin{eqnarray}
\hat J_h''(w)(v,v)\geq \gamma\|v\|_{0,\Omega}^2\nonumber
\end{eqnarray}
holds for all $v\in L^2(\Omega)$ and $w\in U_{ad}$ with $\|w-u\|_{0,\Omega}\leq\epsilon$.
\end{Lemma}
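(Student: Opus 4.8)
The plan is to compare the discrete second derivative $\hat J_h''(w)(v,v)$ directly with its continuous counterpart $\hat J''(w)(v,v)$ at the same point $w$, and then invoke the neighborhood coercivity already established in Lemma \ref{La:4.3}. Writing both second derivatives through the explicit integral representation recorded before Lemma \ref{La:4.2}, the quadratic control terms $\alpha v^2$ cancel, leaving
\begin{eqnarray*}
\hat J''(w)(v,v)-\hat J_h''(w)(v,v)=\int_\Omega\big(\tilde y^2-\tilde y_h^2\big)dx-\int_\Omega\big(p\,\phi''(y)\tilde y^2-p_h\,\phi''(y_h)\tilde y_h^2\big)dx,
\end{eqnarray*}
where $y=G(w)$, $\tilde y=G'(w)v$ and $y_h=G_h(w)$, $\tilde y_h=G_h'(w)v$, with $p$ and $p_h$ the associated continuous and discrete adjoint states. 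The whole proof reduces to showing that the modulus of this difference is bounded by $Ch^2\|v\|_{0,\Omega}^2$, uniformly for $w$ in the $\epsilon$-ball around $u$.

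For the first integral I would factor $\tilde y^2-\tilde y_h^2=(\tilde y+\tilde y_h)(\tilde y-\tilde y_h)$ and bound it by $(\|\tilde y\|_{0,\Omega}+\|\tilde y_h\|_{0,\Omega})\|\tilde y-\tilde y_h\|_{0,\Omega}$; the linearized-state estimate (\ref{non_error_2}) of Lemma \ref{La:4.6} together with the stability bound $\|G'(w)v\|_{0,\Omega}\leq C\|v\|_{0,\Omega}$ and its discrete analogue then yields a contribution of order $h^2\|v\|_{0,\Omega}^2$. The second integral I would split into the telescoping sum
\begin{eqnarray*}
p\,\phi''(y)\tilde y^2-p_h\,\phi''(y_h)\tilde y_h^2&=&(p-p_h)\phi''(y)\tilde y^2+p_h\big(\phi''(y)-\phi''(y_h)\big)\tilde y^2\\
&&+p_h\,\phi''(y_h)\big(\tilde y^2-\tilde y_h^2\big),
\end{eqnarray*}
and estimate each piece exactly as in the proof of Lemma \ref{La:4.2}: the Lipschitz hypothesis on $\phi''$ controls $\|\phi''(y)-\phi''(y_h)\|$ by $\|y-y_h\|_{0,\Omega}$, the embedding $H^1(\Omega)\hookrightarrow L^4(\Omega)$ controls $\|\tilde y\|_{0,4,\Omega}^2$, and the finite element errors (\ref{non_error_1})--(\ref{non_error_2}) make each term $\mathcal{O}(h^2)\|v\|_{0,\Omega}^2$. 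The one ingredient not contained verbatim in Lemma \ref{La:4.6} is the adjoint error bound $\|p-p_h\|_{0,\Omega}\leq Ch^2$, which I would obtain from a standard Aubin--Nitsche duality argument applied to the linear adjoint equation (\ref{adjoint_nonlinear}), feeding in the already-established state error.

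Assembling these estimates gives $|\hat J''(w)(v,v)-\hat J_h''(w)(v,v)|\leq Ch^2\|v\|_{0,\Omega}^2$, and combining with the coercivity $\hat J''(w)(v,v)\geq \frac{\gamma}{2}\|v\|_{0,\Omega}^2$ from Lemma \ref{La:4.3} produces
\begin{eqnarray*}
\hat J_h''(w)(v,v)\geq\Big(\frac{\gamma}{2}-Ch^2\Big)\|v\|_{0,\Omega}^2,
\end{eqnarray*}
so that, after possibly shrinking $\epsilon$ and requiring $h$ small enough, the right-hand side is bounded below by a positive multiple of $\|v\|_{0,\Omega}^2$, which is the claimed discrete coercivity. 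I expect the main obstacle to be the careful bookkeeping in the second integral: one must simultaneously exploit the $L^\infty$ bounds on $p_h$ and $\phi''$, the $L^4$ control of $\tilde y$, and the Lipschitz continuity of $\phi''$, all uniformly in $w$ over the $\epsilon$-ball, and one must separately supply the adjoint error estimate, since only the state and linearized-state errors are stated in Lemma \ref{La:4.6}.
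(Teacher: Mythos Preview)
Your proposal is correct and follows essentially the same route as the paper: both compare $\hat J''(w)(v,v)$ with $\hat J_h''(w)(v,v)$ via the explicit integral formula, use the identical telescoping decomposition of $p\phi''(y)\tilde y^2-p_h\phi''(y_h)\tilde y_h^2$, estimate each piece with Lemma~\ref{La:4.6}, the $H^1\hookrightarrow L^4$ embedding, and the boundedness/Lipschitz continuity of $\phi''$, and then invoke Lemma~\ref{La:4.3}. Your observation that the adjoint error $\|p-p_h\|_{0,\Omega}\leq Ch^2$ must be supplied separately (by a duality argument) is well taken; the paper uses it without singling it out.
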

\begin{proof}
Let $y=G(w)$, $y_h=G_h(w)$, $\tilde y = G'(w)v$ and $\tilde y_h=G_h'(w)v$, $p$ and $p_h$ be the
 continuous and discrete adjoint states associated with $w$, respectively. Similar to the proof 
 of Lemma \ref{La:4.2}, using the explicit representations of $\hat J$ and $\hat J_h$ we have
\begin{eqnarray}
|\hat J''(w)(v,v)-\hat J_h'' (w)(v,v)|&=&\Big|\int_\Omega(\tilde y^2-\tilde y_h^2
+p_h\phi''(y_h)\tilde y_h^2-p\phi''(y)\tilde y^2)dx\Big|\nonumber\\
&\leq&\int_\Omega|(\tilde y+\tilde y_h)(\tilde y-\tilde y_h)+(p_h-p)\phi''(y)\tilde y^2\nonumber\\
&&-p_h\phi''(y_h)(\tilde y^2-\tilde y_h^2)-p_h(\phi''(y)-\phi''(y_h))\tilde y^2|dx,\nonumber
\end{eqnarray}
this together with Lemma \ref{La:4.6}, the boundedness of $\phi''(\cdot)$ and the embedding
 $H^1(\Omega)\hookrightarrow L^4(\Omega)$ gives
\begin{eqnarray}\label{Estimate_Semilinear_1}
&&|\hat J''(w)(v,v)-\hat J_h'' (w)(v,v)|\nonumber\\
&\leq&(\|\tilde y\|_{0,\Omega}+\|\tilde y_h\|_{0,\Omega})\|\tilde y-\tilde y_h\|_{0,\Omega}
+c\|\phi''(y)\|_{0,\infty,\Omega}\|p_h-p\|_{0,\Omega}\|\tilde y\|^2_{0,4,\Omega}\nonumber\\
&&+c\|p_h\|_{0,\infty,\Omega}\big(\|\phi''(y_h)\|_{0,\infty,\Omega}(\|\tilde y\|_{0,\Omega}
+\|\tilde y_h\|_{0,\Omega})\|\tilde y-\tilde y_h\|_{0,\Omega}+\|y-y_h\|_{0,\Omega}\|\tilde y\|^2_{0,4,\Omega}\big)\nonumber\\
&\leq&Ch^2\|v\|_{0,\Omega}^2\nonumber\\
&\leq&{\gamma\over 2}\|v\|_{0,\Omega}^2
\end{eqnarray}
for sufficiently small $h$. Combining (\ref{Estimate_Semilinear_1}) with Lemma \ref{La:4.3} we complete the proof.
\end{proof}

Now we are in the position to derive the a priori error estimates for the above finite element approximations
\begin{Theorem} \label{Thm:priori_semilinear}
Let $(u,y,p)\in U_{ad}\times H_0^1(\Omega)\times H_0^1(\Omega)$ and
 $(\bar u_h,\bar y_h,\bar p_h)\in U_{ad}\times V_h\times V_h$ be the
 solutions of problems (\ref{OCP_nonlinear})-(\ref{OCP_state_nonlinear}) and
 (\ref{OCP_OPT_nonlinear_h}), respectively. Then the following error estimates hold
\begin{eqnarray}\label{u_error_nonlinear}
\|u-\bar u_h\|_{0,\Omega}+\|y-\bar y_h\|_{0,\Omega}
+\|p-\bar p_h\|_{0,\Omega}\leq Ch^{2}.
\end{eqnarray}
\end{Theorem}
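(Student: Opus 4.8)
The plan is to bound the control error $\|u-\bar u_h\|_{0,\Omega}$ first, using the coercivity of the discrete reduced functional supplied by Lemma \ref{La:4.5}, and then to deduce the state and adjoint errors from the stability of the control-to-state map together with Lemma \ref{La:4.6}.

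First I would establish a preliminary qualitative convergence $\|u-\bar u_h\|_{0,\Omega}\to 0$ as $h\to 0$, so that for $h$ small enough both $\bar u_h$ and the whole segment joining $u$ and $\bar u_h$ lie in the $\epsilon$-ball around $u$ on which Lemma \ref{La:4.5} is valid. I expect this to be the main obstacle: because the semilinear problem need not have a unique solution, one must argue that there is a family of discrete local minimizers $\bar u_h$ approaching the fixed local solution $u$ (typically by a compactness argument, extracting a weak limit of $\bar u_h$ in $L^2$ using that $U_{ad}$ is bounded, closed and convex, and then upgrading weak to strong convergence by means of the strong second-order condition of Assumption \ref{Ass:ssc}).

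Once $\bar u_h$ is known to lie in this neighborhood, I would apply the mean value theorem to the scalar map $t\mapsto \hat J_h'(u+t(\bar u_h-u))(u-\bar u_h)$: there is an intermediate point $\xi$ on the segment, hence in $U_{ad}$ with $\|\xi-u\|_{0,\Omega}\leq\epsilon$, such that
\begin{eqnarray*}
(\hat J_h'(u)-\hat J_h'(\bar u_h),u-\bar u_h)=\hat J_h''(\xi)(u-\bar u_h,u-\bar u_h)\geq \gamma\|u-\bar u_h\|_{0,\Omega}^2,
\end{eqnarray*}
the last inequality being exactly Lemma \ref{La:4.5}. Next I would use the two variational inequalities: taking $v=\bar u_h$ in the continuous optimality condition (\ref{OCP_OPT_nonlinear}) and $v_h=u$ in the discrete one (\ref{OCP_OPT_nonlinear_h})—both admissible since variational discretization leaves $U_{ad}$ unchanged—and adding them yields $(\hat J_h'(\bar u_h)-\hat J'(u),u-\bar u_h)\geq 0$. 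Substituting this into the previous line gives
\begin{eqnarray*}
\gamma\|u-\bar u_h\|_{0,\Omega}^2\leq (\hat J_h'(u)-\hat J'(u),u-\bar u_h).
\end{eqnarray*}

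The right-hand side is a pure consistency term. Since $\hat J'(u)v=(\alpha u+p,v)$ and $\hat J_h'(u)v=(\alpha u+p_h(u),v)$, where $p$ and $p_h(u)$ are the continuous and discrete adjoint states at the same control $u$, it equals $(p_h(u)-p,u-\bar u_h)$. The adjoint equation (\ref{adjoint_nonlinear}) has the same structure as the state equation, so the argument underlying Lemma \ref{La:4.6} delivers $\|p-p_h(u)\|_{0,\Omega}\leq Ch^2$; the Cauchy--Schwarz inequality then produces $\gamma\|u-\bar u_h\|_{0,\Omega}^2\leq Ch^2\|u-\bar u_h\|_{0,\Omega}$, whence $\|u-\bar u_h\|_{0,\Omega}\leq Ch^2$. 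Finally I would bound $\|y-\bar y_h\|_{0,\Omega}\leq \|G(u)-G(\bar u_h)\|_{0,\Omega}+\|G(\bar u_h)-G_h(\bar u_h)\|_{0,\Omega}$, controlling the first term by the Lipschitz estimate $C\|u-\bar u_h\|_{0,\Omega}\leq Ch^2$ and the second by Lemma \ref{La:4.6}; an analogous splitting for the adjoint $p$, using the already-established control and state rates, completes the estimate (\ref{u_error_nonlinear}).
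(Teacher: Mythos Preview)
Your proof is correct and follows the same overall architecture as the paper: first establish strong convergence $\bar u_h\to u$, then exploit a second-order coercivity inequality together with the two variational inequalities to reduce the control error to a pure finite-element consistency term of order $h^2$, and finally recover the state and adjoint errors by stability plus Lemma~\ref{La:4.6}.

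The one genuine difference is \emph{which} coercivity result you invoke. The paper uses the continuous functional via Lemma~\ref{La:4.4}, obtaining
\[
\tfrac{\gamma}{2}\|u-\bar u_h\|_{0,\Omega}^2\leq (\hat J'(u)-\hat J'(\bar u_h),u-\bar u_h),
\]
and after inserting both variational inequalities is left with the consistency term $\|p(\bar u_h)-\bar p_h\|_{0,\Omega}$, i.e.\ the error between the \emph{continuous} adjoint at $\bar u_h$ and the \emph{discrete} adjoint $\bar p_h$; this is then split through the auxiliary $p(\bar y_h)$. You instead use the discrete functional via Lemma~\ref{La:4.5}, which produces the consistency term $\|p_h(u)-p\|_{0,\Omega}$, the finite-element error of the adjoint at the fixed exact control $u$. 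Your route is slightly cleaner in that the residual term is a standard Galerkin error and needs no further splitting; the paper's route, on the other hand, stays with the continuous second derivative and thus avoids the extra perturbation argument hidden in the proof of Lemma~\ref{La:4.5}. Both lead to the same $O(h^2)$ bound with comparable effort.
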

\begin{proof}
At first, from  Proposition 4.3 and Theorem 4.4 in \cite{Arada} one can prove that $\bar u_h$ 
converges strongly to $u$.  Then, from Lemma \ref{La:4.4} we have
\begin{eqnarray}
{\gamma\over 2}\|u-\bar u_h\|_{0,\Omega}^2&\leq& (\hat J'(u)-\hat J'(\bar u_h), u-\bar u_h)\nonumber\\
&=&(\alpha u+p,u-\bar u_h)-(\alpha \bar u_h+ p(\bar u_h),u-\bar u_h)\nonumber\\
&\leq&(\alpha \bar u_h+ \bar p_h,\bar u_h-u)+(p(\bar u_h)-\bar p_h,\bar u_h-u)\nonumber\\
&\leq&C(\gamma)\|p(\bar u_h)-\bar p_h\|_{0,\Omega}^2+{\gamma\over 4}\|\bar u_h-u\|_{0,\Omega}^2,\label{Thm4.1-1}
\end{eqnarray}
where $p(\bar u_h)\in H_0^1(\Omega)$ is the solution of the following systems
\begin{eqnarray}
a(y(\bar u_h),v)+(\phi(y(\bar u_h)),v)=(f+\bar u_h,v),\ \ \forall v\in H_0^1(\Omega),\nonumber\\
a(v, p(\bar u_h))+(\phi'(y(\bar u_h))p(\bar u_h),v)=(y(\bar u_h)-y_d,v),\ \ \forall v\in H_0^1(\Omega).\nonumber
\end{eqnarray}
Now it remains to estimate $\|p(\bar u_h)-\bar p_h\|_{0,\Omega}$. We have the splitting
\begin{eqnarray}
\|p(\bar u_h)-\bar p_h\|_{0,\Omega}\leq \|p(\bar u_h)-p(\bar y_h)\|_{0,\Omega}+\|p(\bar y_h)-\bar p_h\|_{0,\Omega}\nonumber
\end{eqnarray}
with $p(\bar y_h)\in H_0^1(\Omega)$ the solution of the following equation
\begin{eqnarray}
a(v, p(\bar y_h))+(\phi'(\bar y_h)p(\bar y_h),v)=(\bar y_h-y_d,v),\ \ \forall v\in H_0^1(\Omega).\nonumber
\end{eqnarray}
Because $\phi'(\cdot)\geq 0$ and $\phi'$ is Lipschitz continuous, setting $v=p(\bar u_h)-p(\bar y_h)$ we have
\begin{eqnarray}
\|p(\bar u_h)-p(\bar y_h)\|_{1,\Omega}^2&\leq& a(p(\bar u_h)-p(\bar y_h),p(\bar u_h)-p(\bar y_h))\nonumber\\
&=&(y(\bar u_h)-\bar y_h,v)-(\phi'(y(\bar u_h))p(\bar u_h)-\phi'(\bar y_h)p(\bar y_h),v)\nonumber\\
&=&(y(\bar u_h)-\bar y_h,v)-((\phi'(y(\bar u_h))-\phi'(\bar y_h))p(\bar u_h),v)\nonumber\\
&&-(\phi'(\bar y_h)(p(\bar u_h)-p(\bar y_h)),v)\nonumber\\
&\leq&(y(\bar u_h)-\bar y_h,v)-((\phi'(y(\bar u_h))-\phi'(\bar y_h))p(\bar u_h),v)\nonumber\\
&\leq&\|y(\bar u_h)-\bar y_h\|_{0,\Omega}\|v\|_{0,\Omega}+C\|y(\bar u_h)-\bar y_h\|_{0,\Omega}
\|p(\bar u_h)\|_{0,4,\Omega}\|v\|_{0,4,\Omega}\nonumber\\
&\leq&C\|y(\bar u_h)-\bar y_h\|_{0,\Omega}(1+\|p(\bar u_h)\|_{1,\Omega})\|v\|_{1,\Omega},\label{Thm4.1-2_new}
\end{eqnarray}
where we used the Sobolev embedding theorem in the last inequality. Collecting the above estimates we arrive at
\begin{eqnarray}
\|u-\bar u_h\|_{0,\Omega}\leq C\|y(\bar u_h)-\bar y_h\|_{0,\Omega}+C\|p(\bar y_h)-\bar p_h\|_{0,\Omega}.\label{Thm4.1-3}
\end{eqnarray}
Note that $\bar y_h$ and $\bar p_h$ are the standard finite element approximations of $y(\bar u_h)$ and $p(\bar y_h)$, respectively. 
Standard error estimates (cf. \cite{Arada}) yield
\begin{eqnarray}
\|u-\bar u_h\|_{0,\Omega}\leq Ch^{2}.\label{Thm4.1-5}
\end{eqnarray}
Similar to (\ref{Thm4.1-2_new}) we can prove that
\begin{eqnarray}
\|y-y(\bar u_h)\|_{0,\Omega}+\|p-p(\bar u_h)\|_{0,\Omega}\leq C\|u-\bar u_h\|_{0,\Omega}.\nonumber
\end{eqnarray}
Then triangle inequality implies that
\begin{eqnarray}
\|y-\bar y_h\|_{0,\Omega}+\|p-\bar p_h\|_{0,\Omega}\leq Ch^{2}.\label{Thm4.1-6}
\end{eqnarray}
This completes the proof.
\end{proof}

Assume that we have obtained the approximate solution
$(u_{h_{k}},y_{h_{k}},p_{h_{k}})\in U_{ad}\times V_{H,h_{k}}\times V_{H,h_{k}}$ on the $k$-th
level mesh $\mathcal{T}_{h_k}$. Now we introduce an one correction step to improve the accuracy of
 the current approximation $(u_{h_{k}},y_{h_{k}},p_{h_{k}})$.
\begin{Algorithm}\label{Alg:4.1}One correction step:
\begin{enumerate}
\item Find $y_{h_{k+1}}^*\in V_{h_{k+1}}$ such that
\begin{equation}\label{step_2_semilinear}
a(y^*_{h_{k+1}},v_{h_{k+1}})
= (f+u_{h_{k}},v_{h_{k+1}})-(\phi(y_{h_k}),v_{h_{k+1}}),\ \ \ \ \forall v_{h_{k+1}}\in V_{h_{k+1}}.
\end{equation}
Solve the above equation with multigrid method to obtain an approximation
$\hat y_{h_{k+1}}\in V_{h_{k+1}}$ with error
$\|\hat y_{h_{k+1}}-y_{h_{k+1}}^*\|_{1,\Omega}\leq Ch_{h_{k}}^{2}$
and define $\hat y_{h_{k+1}}:=MG(V_{h_{k+1}},u_{h_k})$.

\item Find $p_{h_{k+1}}^*\in V_{h_{k+1}}$ such that
\begin{equation}\label{step_3_semilinear}
a(w_{h_{k+1}},p^*_{h_{k+1}})+(\phi'(\hat y_{h_{k+1}})p^*_{h_{k+1}},v_{h_{k+1}})
 = (\hat y_{h_{k+1}}-y_d,v_{h_{k+1}}),\ \ \ \ \forall v_{h_{k+1}}\in V_{h_{k+1}}.
\end{equation}
Solve the above equation with multigrid method to obtain an approximation
$\hat p_{h_{k+1}}\in V_{h_{k+1}}$ with error
$\|\hat p_{h_{k+1}}-p_{h_{k+1}}^*\|_{1,\Omega}\leq Ch_{h_{k}}^{2}$
and define $\hat p_{h_{k+1}}:=MG(V_{h_{k+1}},\hat y_{h_{k+1}})$.

\item Define a new finite element space
$V_{H,h_{k+1}}:=V_H+{\rm span}\{\hat y_{h_{k+1}}\}+{\rm span}\{\hat p_{h_{k+1}}\}$
and solve the following optimal control problem:
\begin{eqnarray}\label{step_4_semilinear}
\min\limits_{u_{h_{k+1}}\in U_{ad},\ y_{h_{k+1}}
\in V_{H,h_{k+1}}} J(y_{h_{k+1}},u_{h_{k+1}})&={1\over
2}\|y_{h_{k+1}}-y_d\|_{0,\Omega}^2 +
\frac{\alpha}{2}\|u_{h_{k+1}}\|_{0,\Omega}^2
\end{eqnarray}
subject to
\begin{equation}\label{step_4_state_semilinear}
a(y_{h_{k+1}},v_{H,h_{k+1}})+(\phi(y_{h_{k+1}}),v_{H,h_{k+1}})
= (f+u_{h_{k+1}},v_{H,h_{k+1}}),\ \ \ \ \forall v_{H,h_{k+1}}\in V_{H,h_{k+1}}.
\end{equation}
The corresponding optimality condition reads:
 Find $(u_{h_{k+1}},y_{h_{k+1}},p_{h_{k+1}})\in U_{ad}
 \times V_{H,h_{k+1}}\times V_{H,h_{k+1}}$ such that
\begin{equation}\label{step_4_OPT_semilinear}
\left\{\begin{array}{lll}
a(y_{h_{k+1}},v_{H,h_{k+1}}) +(\phi(y_{h_{k+1}}),v_{H,h_{k+1}})
= (f+u_{h_{k+1}},v_{H,h_{k+1}}), &\forall v_{H,h_{k+1}}\in V_{H,h_{k+1}},\\
a(v_{H,h_{k+1}},p_{h_{k+1}}) +(\phi'(y_{h_{k+1}})p_{h_{k+1}},v_{H,h_{k+1}})
= (y_{h_{k+1}}-y_d,v_{H,h_{k+1}}), &\forall v_{H,h_{k+1}}\in V_{H,h_{k+1}},\\
(\alpha u_{h_{k+1}}+p_{h_{k+1}},v-u_{h_{k+1}})\geq 0, &\forall v\in U_{ad}.
\end{array} \right.
\end{equation}
\end{enumerate}
We define the output of above algorithm as
\begin{eqnarray}\label{correction_semilinear}
(u_{h_{k+1}},y_{h_{k+1}},p_{h_{k+1}})=\mbox{\rm Correction}(V_H,u_{h_{k}},y_{h_k}, p_{h_k}, V_{h_{k+1}}).
\end{eqnarray}
\end{Algorithm}
\begin{Remark}
As for the linear case, in Algorithm \ref{Alg:4.1} one needs to solve a nonlinear optimization 
problem (\ref{step_4_semilinear})-(\ref{step_4_state_semilinear}) on the finite element space
$V_{H,h_{k+1}}$, there are several ways to provide a good initial guess for the optimization 
algorithm which may speed up the convergence. Also, the good initial guess would lead to the 
correct solution for the nonlinear optimization problem. One option is to use $u_{h_{k}}$ as 
initial guess, while the other choice is $P_{U_{ad}}\{-\frac{1}{\alpha}\hat p_{h_{k+1}}\}$.
\end{Remark}
In the following of this paper, we denote
$(\bar u_{h_{k+1}},\bar y_{h_{k+1}},\bar p_{h_{k+1}})\in U_{ad}
\times V_{h_{k+1}}\times V_{h_{k+1}}$ the finite element solution to the
discrete optimal control problems (\ref{OCP_nonlinear_h})-(\ref{OCP_state_nonlinear_h})
in the finite element space $V_{h_{k+1}}$.
We are able to analyze the error estimates between solutions
 $(\bar u_{h_{k+1}},\bar y_{h_{k+1}},\bar p_{h_{k+1}})$
 and the correction one $(u_{h_{k+1}},y_{h_{k+1}},p_{h_{k+1}})$
 on mesh level $\mathcal{T}_{h_{k+1}}$.
\begin{Theorem}\label{Thm:4.1}
Let $(\bar u_{h_{k+1}},\bar y_{h_{k+1}},
\bar p_{h_{k+1}})\in U_{ad}\times V_{h_{k+1}}\times V_{h_{k+1}}$ be
the solution of problems (\ref{OCP_nonlinear_h})-(\ref{OCP_state_nonlinear_h}) and
$(u_{h_{k+1}},y_{h_{k+1}},p_{h_{k+1}})$ be the numerical approximation
by Algorithm \ref{Alg:4.1}, respectively.
Assume there exists a real number $\eta_{h_k}$ such that
$(u_{h_k}, y_{h_k}, p_{h_k})$ have the following error estimates
\begin{eqnarray}\label{assume_error_nonlinear}
\|\bar u_{h_k}-u_{h_k}\|_{0,\Omega}+\|\bar y_{h_k}-y_{h_k}\|_{0,\Omega}
+\|\bar p_{h_k}-p_{h_k}\|_{0,\Omega} = \eta_{h_k}.
\end{eqnarray}
Then the following error estimates hold
\begin{eqnarray}\label{correct_error_nonlinear}
\|\bar u_{h_{k+1}}-u_{h_{k+1}}\|_{0,\Omega}
+\|\bar y_{h_{k+1}}-y_{h_{k+1}}\|_{0,\Omega}
+\|\bar p_{h_{k+1}}-p_{h_{k+1}}\|_{0,\Omega}
\leq C\eta_{h_{k+1}},
\end{eqnarray}
where $\eta_{h_{k+1}}=H(h_{k}^{2}+\eta_{h_{k}})$.
\end{Theorem}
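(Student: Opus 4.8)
The plan is to mirror the structure of the proof of Theorem \ref{Thm:3.1}, adapting each step to accommodate the nonlinearity $\phi$. First I would compare the correction-step state equation \eqref{step_2_semilinear} against the equation satisfied by $\bar y_{h_{k+1}}$ in $V_{h_{k+1}}$. Subtracting, the difference $\bar y_{h_{k+1}}-y^*_{h_{k+1}}$ solves an equation whose right-hand side contains both $(\bar u_{h_{k+1}}-u_{h_k},\cdot)$ and the nonlinear discrepancy $(\phi(\bar y_{h_{k+1}})-\phi(y_{h_k}),\cdot)$. Here I would use the mean value theorem together with $\phi'\geq 0$ and its Lipschitz bound from Lemma \ref{La:4.6}'s hypotheses, so that the $\phi$-term either drops to the correct side (by coercivity, since $\phi'\geq 0$ makes the bilinear form $a(\cdot,\cdot)+(\phi'(\cdot)\,\cdot,\cdot)$ coercive) or is absorbed into $\|\bar y_{h_{k+1}}-y^*_{h_{k+1}}\|$. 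Splitting $\bar u_{h_{k+1}}-u_{h_k}=(\bar u_{h_{k+1}}-\bar u_{h_k})+(\bar u_{h_k}-u_{h_k})$ and invoking Theorem \ref{Thm:priori_semilinear} for the first piece (giving $Ch_k^2$) and the hypothesis \eqref{assume_error_nonlinear} for the second (giving $\eta_{h_k}$), I obtain $\|\bar y_{h_{k+1}}-y^*_{h_{k+1}}\|_{1,\Omega}\leq C(h_k^2+\eta_{h_k})$. Adding the multigrid error $\|\hat y_{h_{k+1}}-y^*_{h_{k+1}}\|_{1,\Omega}\leq Ch_k^2$ via the triangle inequality yields $\|\bar y_{h_{k+1}}-\hat y_{h_{k+1}}\|_{1,\Omega}\leq C(h_k^2+\eta_{h_k})$, the analogue of \eqref{correct_error_2}; the adjoint estimate follows similarly from \eqref{step_3_semilinear}.

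The second and central step is the control error. Subtracting the two variational inequalities in \eqref{OCP_OPT_h}-type form (here from \eqref{OCP_OPT_nonlinear_h} and \eqref{step_4_OPT_semilinear}) with the cross-choices $v=u_{h_{k+1}}$, $w=\bar u_{h_{k+1}}$ gives control over $(\bar u_{h_{k+1}}-u_{h_{k+1}},\,p_{h_{k+1}}-\bar p_{h_{k+1}})$. In the linear case Theorem \ref{Thm:3.1} used the simple coercivity $\alpha\|\bar u_{h_{k+1}}-u_{h_{k+1}}\|^2\leq(\cdots)$; in the nonlinear setting I would instead invoke the local strong convexity from Lemma \ref{La:4.4} (and its discrete counterpart implicit in Lemma \ref{La:4.5}), which supplies $\tfrac{\gamma}{2}\|\bar u_{h_{k+1}}-u_{h_{k+1}}\|_{0,\Omega}^2\leq(\hat J_h'(\bar u_{h_{k+1}})-\hat J_h'(u_{h_{k+1}}),\bar u_{h_{k+1}}-u_{h_{k+1}})$, \emph{provided} both controls lie in the $\epsilon$-neighborhood of the continuous optimum $u$. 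This coercivity replaces the role of $\alpha$ and lets me bound the control error by the auxiliary-state and auxiliary-adjoint errors $\|y_{h_{k+1}}(\bar u_{h_{k+1}})-\bar y_{h_{k+1}}\|_{0,\Omega}$ and $\|p_{h_{k+1}}(\bar y_{h_{k+1}})-\bar p_{h_{k+1}}\|_{0,\Omega}$, exactly as in \eqref{correct_error_5}.

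These two auxiliary quantities I would then estimate by recognizing $y_{h_{k+1}}(\bar u_{h_{k+1}})$ and $p_{h_{k+1}}(\bar y_{h_{k+1}})$ as Galerkin projections of $\bar y_{h_{k+1}}$, $\bar p_{h_{k+1}}$ onto the small space $V_{H,h_{k+1}}\subset V_{h_{k+1}}$. Because $\hat y_{h_{k+1}},\hat p_{h_{k+1}}\in V_{H,h_{k+1}}$ by construction, the best-approximation (Céa) bound forces the $H^1$-error below $C(h_k^2+\eta_{h_k})$ as in \eqref{correct_error_6}, and the Aubin–Nitsche duality argument then gains the extra factor $H$ to give the $L^2$-estimate $CH(h_k^2+\eta_{h_k})$ of \eqref{correct_error_7}-\eqref{correct_error_8}. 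For the semilinear problem the duality and Céa arguments must be run for the linearized operator $a(\cdot,\cdot)+(\phi'(\cdot)\,\cdot,\cdot)$, which remains coercive and $H^2$-regular because $\phi'\geq 0$, so the same orders hold. Assembling these into the control bound and applying the triangle inequality to recover the adjoint error yields $\|\bar u_{h_{k+1}}-u_{h_{k+1}}\|_{0,\Omega}+\|\bar y_{h_{k+1}}-y_{h_{k+1}}\|_{0,\Omega}+\|\bar p_{h_{k+1}}-p_{h_{k+1}}\|_{0,\Omega}\leq CH(h_k^2+\eta_{h_k})=C\eta_{h_{k+1}}$.

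The main obstacle, which the linear proof does not face, is justifying that the discrete solutions produced by the correction algorithm actually stay inside the $\epsilon$-ball around $u$ where Lemmas \ref{La:4.4} and \ref{La:4.5} apply. Since the nonlinear problem need not have a unique solution, I must verify that the good initial guess (from the multigrid-computed $\hat y,\hat p$) keeps $u_{h_{k+1}}$ near $u$; this is where the strong-convexity neighborhood, the strong convergence of $\bar u_h\to u$ established in the proof of Theorem \ref{Thm:priori_semilinear}, and the inductive smallness of $\eta_{h_k}$ must be combined. A secondary technical point is the careful treatment of the nonlinear terms $\phi(\bar y_{h_{k+1}})-\phi(y_{h_k})$ and $\phi'(\hat y_{h_{k+1}})p^*_{h_{k+1}}-\phi'(\bar y_{h_{k+1}})\bar p_{h_{k+1}}$ when deriving the state and adjoint bounds: these require the Lipschitz continuity of $\phi''$ and the $L^\infty$-boundedness of $y,p$ to control the products, much as in the estimates \eqref{Thm4.1-2_new} already carried out for Theorem \ref{Thm:priori_semilinear}.
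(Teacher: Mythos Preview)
Your proposal is correct and follows essentially the same route as the paper: estimate $\|\bar y_{h_{k+1}}-\hat y_{h_{k+1}}\|_{1,\Omega}$ and $\|\bar p_{h_{k+1}}-\hat p_{h_{k+1}}\|_{1,\Omega}$ by subtracting equations and using the Lipschitz property of $\phi$ together with $\phi'\ge 0$; then bound the control error via the discrete second-order coercivity of Lemma~\ref{La:4.5}, reducing to the auxiliary Galerkin errors $\|y_{h_{k+1}}(\bar u_{h_{k+1}})-\bar y_{h_{k+1}}\|_{0,\Omega}$ and $\|p_{h_{k+1}}(\bar y_{h_{k+1}})-\bar p_{h_{k+1}}\|_{0,\Omega}$, which are handled by C\'ea plus an Aubin--Nitsche argument for the semilinear operator to gain the factor $H$. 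Your identification of the $\epsilon$-ball localization as the key extra difficulty is in fact \emph{more} careful than the paper, which simply invokes Lemma~\ref{La:4.5} without explicitly verifying that $u_{h_{k+1}}$ remains in the neighborhood of $u$.
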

\begin{proof}
Setting $v=\bar y_{h_{k+1}}-y_{h_{k+1}}^*$, from the state equation approximation we have
\begin{eqnarray}
\|\bar y_{h_{k+1}}-y_{h_{k+1}}^*\|_{1,\Omega}^2&\leq& a(\bar y_{h_{k+1}}-y_{h_{k+1}}^*,\bar y_{h_{k+1}}-y_{h_{k+1}}^*)\nonumber\\
&=&(\bar u_{h_{k+1}}-u_{h_k},v)-(\phi(\bar y_{h_{k+1}})-\phi(y_{h_k}),v)\nonumber\\
&\leq&C(\|\bar u_{h_{k+1}}-u_{h_k}\|_{0,\Omega}+\|\bar y_{h_{k+1}}-y_{h_{k}}\|_{0,\Omega})\|v\|_{1,\Omega},\nonumber
\end{eqnarray}
which together with Theorem \ref{Thm:priori_semilinear} implies that
\begin{eqnarray}
\|\bar y_{h_{k+1}}-y_{h_{k+1}}^*\|_{1,\Omega}&\leq& C\|\bar u_{h_{k+1}}-\bar u_{h_k}\|_{0,\Omega}+C\|\bar u_{h_{k}}-u_{h_k}\|_{0,\Omega}\nonumber\\
&&+C\|\bar y_{h_{k+1}}-\bar y_{h_k}\|_{0,\Omega}+C\|\bar y_{h_{k}}-y_{h_k}\|_{0,\Omega}\nonumber\\
&\leq&C(h_k^{2}+\eta_k).\label{Thm4.1-1}
\end{eqnarray}
So we can derive
\begin{eqnarray}
\|\bar y_{h_{k+1}}-\hat y_{h_{k+1}}\|_{1,\Omega}&\leq&\|\bar y_{h_{k+1}}-y_{h_{k+1}}^*\|_{1,\Omega}+\|y_{h_{k+1}}^*-\hat y_{h_{k+1}}\|_{1,\Omega}\nonumber\\
&\leq&C(h_k^{2}+\eta_k).\label{Thm4.1-2}
\end{eqnarray}
Setting $v=\bar p_{h_{k+1}}-p_{h_{k+1}}^*$, we conclude from the adjoint state equation and $\phi'(\cdot)\geq 0$ that
\begin{eqnarray}
\|\bar p_{h_{k+1}}-p_{h_{k+1}}^*\|_{1,\Omega}^2&\leq& a(\bar p_{h_{k+1}}-p_{h_{k+1}}^*,\bar p_{h_{k+1}}-p_{h_{k+1}}^*)\nonumber\\
&=&(\bar y_{h_{k+1}}-\hat y_{h_{k+1}},v)-(\phi'(\bar y_{h_{k+1}})\bar p_{h_{k+1}}-\phi'(\hat y_{h_{k+1}})p_{h_{k+1}}^*,v)\nonumber\\
&=&(\bar y_{h_{k+1}}-\hat y_{h_{k+1}},v)-((\phi'(\bar y_{h_{k+1}})-\phi'(\hat y_{h_{k+1}}))\bar p_{h_{k+1}},v)\nonumber\\
&&-(\phi'(\hat y_{h_{k+1}})(\bar p_{h_{k+1}}-p_{h_{k+1}}^*),v)\nonumber\\
&\leq&(\bar y_{h_{k+1}}-\hat y_{h_{k+1}},v)-((\phi'(\bar y_{h_{k+1}})-\phi'(\hat y_{h_{k+1}}))\bar p_{h_{k+1}},v)\nonumber\\
&\leq&C\|\bar y_{h_{k+1}}-\hat y_{h_{k+1}}\|_{0,\Omega}\|v\|_{0,\Omega}
+\|\bar y_{h_{k+1}}-\hat y_{h_{k+1}}\|_{0,\Omega}\|\bar p_{h_{k+1}}\|_{0,4,\Omega}\|v\|_{0,4,\Omega}\nonumber\\
&\leq&C\|\bar y_{h_{k+1}}-\hat y_{h_{k+1}}\|_{0,\Omega}(1+\|\bar p_{h_{k+1}}\|_{1,\Omega})\|v\|_{1,\Omega},\nonumber
\end{eqnarray}
which gives
\begin{eqnarray}
\|\bar p_{h_{k+1}}-p_{h_{k+1}}^*\|_{1,\Omega}\leq C\|\bar y_{h_{k+1}}-\hat y_{h_{k+1}}\|_{0,\Omega}
\leq C(h_k^{2}+\eta_k).\label{Thm4.1-3}
\end{eqnarray}
Similar to (\ref{Thm4.1-2}) we have
\begin{eqnarray}
\|\bar p_{h_{k+1}}-\hat p_{h_{k+1}}\|_{1,\Omega}&\leq&\|\bar p_{h_{k+1}}-p_{h_{k+1}}^*\|_{1,\Omega}
+\|p_{h_{k+1}}^*-\hat p_{h_{k+1}}\|_{1,\Omega}\nonumber\\
&\leq&C(h_k^{2}+\eta_k).\label{Thm4.1-4}
\end{eqnarray}
From the coercivity of the second order derivative of the discrete reduced objective functional presented 
in Lemma \ref{La:4.4}, for some $\theta\in [0,1]$ we can derive
\begin{eqnarray}
&&\gamma\|\bar u_{h_{k+1}}-u_{h_{k+1}}\|_{0,\Omega}^2\nonumber\\
&\leq& \hat J_h''(\theta \bar u_{h_{k+1}}+(1-\theta)u_{h_{k+1}})
(\bar u_{h_{k+1}}-u_{h_{k+1}},\bar u_{h_{k+1}}-u_{h_{k+1}})\nonumber\\
&=&\hat J_h'(\bar u_{h_{k+1}})(\bar u_{h_{k+1}}-u_{h_{k+1}})-\hat J_h'(u_{h_{k+1}})(\bar u_{h_{k+1}}-u_{h_{k+1}})\nonumber\\
&=&(\alpha \bar u_{h_{k+1}}+p_{h_{k+1}}(\bar u_{h_{k+1}}),\bar u_{h_{k+1}}-u_{h_{k+1}})
-(\alpha u_{h_{k+1}}+p_{h_{k+1}},\bar u_{h_{k+1}}-u_{h_{k+1}})\nonumber\\
&\leq&(\alpha \bar u_{h_{k+1}}+\bar p_{h_{k+1}},\bar u_{h_{k+1}}-u_{h_{k+1}})
+(p_{h_{k+1}}(\bar u_{h_{k+1}})-\bar p_{h_{k+1}},\bar u_{h_{k+1}}-u_{h_{k+1}})\nonumber\\
&\leq&(p_{h_{k+1}}(\bar u_{h_{k+1}})-\bar p_{h_{k+1}},\bar u_{h_{k+1}}-u_{h_{k+1}}),\nonumber
\end{eqnarray}
which implies that
\begin{eqnarray}
&&{\gamma\over 4}\|\bar u_{h_{k+1}}-u_{h_{k+1}}\|_{0,\Omega}
\leq C\|p_{h_{k+1}}(\bar u_{h_{k+1}})-\bar p_{h_{k+1}}\|_{0,\Omega}\nonumber\\
&\leq&C\|p_{h_{k+1}}(\bar u_{h_{k+1}})-p_{h_{k+1}}(\bar y_{h_{k+1}})\|_{0,\Omega}
+C\|p_{h_{k+1}}(\bar y_{h_{k+1}})-\bar p_{h_{k+1}}\|_{0,\Omega}\nonumber\\
&\leq&C\|y_{h_{k+1}}(\bar u_{h_{k+1}})-\bar y_{h_{k+1}}\|_{0,\Omega}
+C\|p_{h_{k+1}}(\bar y_{h_{k+1}})-\bar p_{h_{k+1}}\|_{0,\Omega}.\label{Thm4.1-5}
\end{eqnarray}
It is easy to see that $y_{h_{k+1}}(\bar u_{h_{k+1}})$ is the finite element
 approximation to $\bar y_{h_{k+1}}$ on $V_{H,h_{k+1}}$ for the semilinear elliptic 
 equation because of $V_{H,h_{k+1}}\subset V_{h_{k+1}}$. A Ce\'{a}-lemma for semilinear elliptic equation implies
\begin{eqnarray}
\|y_{h_{k+1}}(\bar u_{h_{k+1}})-\bar y_{h_{k+1}}\|_{1,\Omega}
&\leq& C\inf\limits_{v_{H,h_{k+1}}\in V_{H,h_{k+1}}}\|\bar y_{h_{k+1}}-v_{H,h_{k+1}}\|_{1,\Omega}\nonumber\\
&\leq&C\|\bar y_{h_{k+1}}-\hat y_{h_{k+1}}\|_{1,\Omega}\nonumber\\
&\leq& C(h_k^{2}+\eta_{h_k}).\label{Thm4.1-6}
\end{eqnarray}
Now we prove the improved $L^2$-norm estimate by Aubin-Nitsche argument. Consider the following adjoint equation
\begin{equation}\label{duality}
\left\{\begin{array}{llr} -\Delta \psi+\phi'(\bar y_{h_{k+1}})\psi
= y_{h_{k+1}}(\bar u_{h_{k+1}})-\bar y_{h_{k+1}}\ \ &\mbox{in}\
\Omega, \\
\ \ \ \ \ \ \ \ \ \ \ \ \ \ \ \ \ \ \ \ \ \  \psi=0  \ \ \ &\mbox{on}\ \partial\Omega.
\end{array}
\right.
\end{equation}
Then we have $\|\psi\|_{2,\Omega}\leq C\|y_{h_{k+1}}(\bar u_{h_{k+1}})-\bar y_{h_{k+1}}\|_{0,\Omega}$.
 Setting $v=y_{h_{k+1}}(\bar u_{h_{k+1}})-\bar y_{h_{k+1}}$ we have
\begin{eqnarray}
&&\|y_{h_{k+1}}(\bar u_{h_{k+1}})-\bar y_{h_{k+1}}\|_{0,\Omega}^2
=(y_{h_{k+1}}(\bar u_{h_{k+1}})-\bar y_{h_{k+1}},v)\nonumber\\
&=&a(y_{h_{k+1}}(\bar u_{h_{k+1}})-\bar y_{h_{k+1}},\psi)
+(\phi'(\bar y_{h_{k+1}})\psi,y_{h_{k+1}}(\bar u_{h_{k+1}})-\bar y_{h_{k+1}})\nonumber\\
&=&a(y_{h_{k+1}}(\bar u_{h_{k+1}})-\bar y_{h_{k+1}},\psi-\Pi_H\psi)
+(\phi(y_{h_{k+1}}(\bar u_{h_{k+1}}))-\phi(\bar y_{h_{k+1}}),\psi-\Pi_H\psi)\nonumber\\
&&+(\phi'(\bar y_{h_{k+1}})\psi,y_{h_{k+1}}(\bar u_{h_{k+1}})-\bar y_{h_{k+1}})
+((\bar y_{h_{k+1}}-y_{h_{k+1}}(\bar u_{h_{k+1}}))\phi'(\theta)),\psi)\nonumber\\
&\leq&CH\|\psi\|_{2,\Omega}\|y_{h_{k+1}}(\bar u_{h_{k+1}})-\bar y_{h_{k+1}}\|_{1,\Omega}
+C\|\psi\|_{0,\infty,\Omega}\|\phi''(\xi)\|_{0,\infty,\Omega}\|y_{h_{k+1}}
(\bar u_{h_{k+1}})-\bar y_{h_{k+1}}\|_{0,\Omega}^2\nonumber\\
&\leq&C\|\psi\|_{2,\Omega}(H\|y_{h_{k+1}}(\bar u_{h_{k+1}})-\bar y_{h_{k+1}}\|_{1,\Omega}
+\|y_{h_{k+1}}(\bar u_{h_{k+1}})-\bar y_{h_{k+1}}\|_{0,\Omega}^2),\nonumber
\end{eqnarray}
where $\Pi_H$ denotes the interpolation of $\psi$ in the finite element space $V_H$, 
$\theta=a_1\bar y_{h_{k+1}}+(1-a_1)y_{h_{k+1}}(\bar u_{h_{k+1}})$ for some $a_1\in [0,1]$ 
and $\xi=a_2\bar y_{h_{k+1}}+(1-a_2)\theta$ for some $a_2\in [0,1]$. 
Since $\|y_{h_{k+1}}(\bar u_{h_{k+1}})-\bar y_{h_{k+1}}\|_{0,\Omega}\ll 1$, we can conclude that
\begin{eqnarray}\label{Thm4.1-7}
\|y_{h_{k+1}}(\bar u_{h_{k+1}})-\bar y_{h_{k+1}}\|_{0,\Omega}&\leq&
CH\|y_{h_{k+1}}(\bar u_{h_{k+1}})-\bar y_{h_{k+1}}\|_{1,\Omega}\nonumber\\
&\leq&CH(h_k^{2}+\eta_{h_k}).
\end{eqnarray}
Similar to (\ref{Thm4.1-6})-(\ref{Thm4.1-7}), we can derive
\begin{eqnarray}\label{Thm4.1-8}
\|p_{h_{k+1}}(\bar y_{h_{k+1}})-\bar p_{h_{k+1}}\|_{0,\Omega}
&\leq&CH(h_k^{2}+\eta_{h_k}).
\end{eqnarray}
Combining (\ref{Thm4.1-5}), (\ref{Thm4.1-7}) and (\ref{Thm4.1-8}) leads to
the following estimate
\begin{eqnarray}\label{Thm4.1-9}
\|\bar u_{h_{k+1}}-u_{h_{k+1}}\|_{0,\Omega}
&\leq&CH(h_k^{2}+\eta_{h_k}).
\end{eqnarray}
Using the triangle inequality, we obtain
\begin{eqnarray*}
\|\bar u_{h_{k+1}}-u_{h_{k+1}}\|_{0,\Omega}+\|\bar y_{h_{k+1}}-y_{h_{k+1}}\|_{0,\Omega}
+\|\bar p_{h_{k+1}}-p_{h_{k+1}}\|_{0,\Omega}
\leq CH(h_k^{2}+\eta_{h_k}).
\end{eqnarray*}
This is the desired result (\ref{correct_error_nonlinear}) and the proof is complete. 
\end{proof}
Based on the sequence of nested finite element spaces
$V_{h_1}\subset V_{h_2}\subset\cdots\subset V_{h_n}$ and the one correction step
defined in Algorithm \ref{Alg:4.1},
we can define the multilevel correction method to solve the nonlinear optimal control problem:
\begin{Algorithm}\label{Alg:4.2}
A multilevel correction method for nonlinear optimal control problem:
\begin{enumerate}
\item Solve a nonlinear optimal control problem in the initial space $V_{h_1}$:
\begin{eqnarray}\label{step_1_semilinear}
\min\limits_{u_{h_1}\in U_{ad},\ y_{h_1}\in V_{h_1}}\ \ J(y_{h_1},u_{h_1})={1\over
2}\|y_{h_1}-y_d\|_{0,\Omega}^2 +
\frac{\alpha}{2}\|u_{h_1}\|_{0,\Omega}^2
\end{eqnarray}
subject to
\begin{equation}\label{step_1_state_semilinear}
a(y_{h_1},v_{h_1}) +(\phi(y_{h_1}),v_{h_1})= (f+u_{h_1},v_{h_1}),
\ \ \ \ \forall v_{h_1}\in V_{h_1}.
\end{equation}
The corresponding optimality condition reads:
Find $(u_{h_1},y_{h_1},p_{h_1})\in U_{ad}\times V_{h_1}\times V_{h_1}$ such that
\begin{equation}\label{step_1_OPT_semilinear}
\left\{\begin{array}{llr}
a(y_{h_1},v_{h_1})+(\phi(y_{h_1}),v_{h_1})
= (f+u_{h_1},v_{h_1}),\ \ &\forall v_{h_1}\in V_{h_1},\\
a(v_{h_1},p_{h_1}) +(\phi'(y_{h_1})p_{h_1},v_{h_1})
= (y_{h_1}-y_d,v_{h_1}),\ \ &\forall v_{h_1}\in V_{h_1},\\
(\alpha u_{h_1}+p_{h_1},v-u_{h_1})\geq 0,\ \ &\forall v\in U_{ad}.
\end{array}
\right.
\end{equation}

\item Do $k=1$, $\cdots$, $n-1$
	
Obtain a new optimal solution $(u_{h_{k+1}},y_{h_{k+1}},p_{h_{k+1}})\in U_{ad}
\times V_{h_{k+1}}\times V_{h_{k+1}}$ by Algorithm \ref{Alg:4.1}
\begin{eqnarray*}
(u_{h_{k+1}},y_{h_{k+1}},p_{h_{k+1}})={\rm Correction}(V_H,u_{h_{k}},y_{h_k}, p_{h_k}, V_{h_{k+1}}).
\end{eqnarray*}
end Do
\end{enumerate}
Finally, we obtain a numerical approximation $(u_{h_n},y_{h_n}, p_{h_n})
\in U_{ad}\times V_{h_{n}}\times V_{h_n}$ for
 problem (\ref{OCP_nonlinear})-(\ref{OCP_state_nonlinear}).
\end{Algorithm}

Now we are in the position to give the error estimates for the solution generated by the above
 multilevel correction scheme described in Algorithm \ref{Alg:4.2}.
\begin{Theorem}\label{Thm:4.2}
Let $(u,y,p)\in U_{ad}\times H_0^1(\Omega)\times H_0^1(\Omega)$ and
 $(u_{h_n},y_{h_n},p_{h_n})\in U_{ad}\times V_{h_n}\times V_{h_n}$
 be the solution of problems (\ref{OCP_nonlinear})-(\ref{OCP_state_nonlinear})
 and the solution by Algorithm \ref{Alg:4.2}, respectively.
Assume the mesh size $H$ satisfies the condition  $CH\beta^2<1$.
Then the following error estimates hold
\begin{eqnarray}\label{multigrid_error_nonlinear}
\|\bar{u}_{h_n}-u_{h_n}\|_{0,\Omega}+\|\bar{y}_{h_n}-y_{h_n}\|_{0,\Omega}
+\|\bar{p}_{h_n}-p_{h_n}\|_{0,\Omega}\leq Ch_n^{2}.
\end{eqnarray}
Finally, we have the following error estimates
\begin{eqnarray}\label{final_error_nonlinear}
\|u-u_{h_n}\|_{0,\Omega}+\|y-y_{h_n}\|_{0,\Omega}+\|p-p_{h_n}\|_{0,\Omega}\leq
Ch_n^{2}.
\end{eqnarray}
\end{Theorem}
\begin{proof}
The proof is similar to the proof of Theorem \ref{Thm:3.2}, we omit it here.
\end{proof}

\section{Numerical Examples}\setcounter{equation}{0}

To test the efficiency of our proposed algorithm, we in this section carry
out some numerical experiments. All the computations are based on the C++ library AFEPack (see \cite{Li and Liu}). 
At first, we  consider the following linear-quadratic optimal control problem:
\begin{eqnarray}
\min\limits_{u\in U_{ad}} J(y,u)={1\over
2}\|y-y_d\|_{0,\Omega}^2 +
\frac{\alpha}{2}\|u\|_{0,\Omega}^2\nonumber
\end{eqnarray}
subject to
\begin{equation}\label{example_state}
\left\{\begin{array}{llr} -\Delta y=f+u \ \ &\mbox{in}\
\Omega, \\
 \ y=0  \ \ \ &\mbox{on}\ \partial\Omega,\\
 a(x)\leq u(x)\leq b(x)\ \ &\mbox{a.e.\ in}\ \Omega.
\end{array} \right.
\end{equation}

\begin{Example}\label{Exm:1.1}
We set $\Omega=[0,1]^2$. Let
$a=-1$, $b=1$, $\alpha =0.1$, $g(x_1,x_2)=2\pi^2\sin(\pi x_1)\sin(\pi x_2)$.
Then $f$ is chosen as
\begin{eqnarray}\label{nonumber}
f(x_1,x_2) =\left\{\begin{array}{llr}
g(x_1,x_2)-a,\ \ \ \ \ &g(x_1,x_2)< a,\\
0, \  \ \ \ \ &a\leq g(x_1,x_2)\leq b,\\
g(x_1,x_2)-b,\ \ \ \ \ &g(x_1,x_2)>b.
\end{array}\right.
\end{eqnarray}
Due to the state equation (\ref{example_state}),
we obtain the exact optimal control $u$
\begin{eqnarray}\label{nonumber}
u(x_1,x_2) =\left\{\begin{array}{llr}
a,\ \ \ \ \ &g(x_1,x_2)< a;\\
g(x_1,x_2), \  \ \ \ \ &a\leq g(x_1,x_2)\leq b;\\
b,\ \ \ \ \ &g(x_1,x_2)>b.
\end{array}\right.
\end{eqnarray}
We also have
\begin{eqnarray*}
y(x_1,x_2)=\sin(\pi x_1)\sin(\pi x_2),\nonumber\\
p(x_1,x_2)=-2\pi^2\alpha\sin(\pi x_1)\sin(\pi x_2).
\end{eqnarray*}
The desired state is given by
\begin{eqnarray*}
y_d(x_1,x_2)=y(x_1,x_2)+4\pi^4\alpha\sin(\pi x_1)\sin(\pi x_2).
\end{eqnarray*}
\end{Example}
At first, we consider the comparison of errors for the solutions by the direct
solving of optimal control problem and the multilevel correction method defined by
 Algorithm \ref{Alg:3.2}, respectively, on the  sequence of nested linear finite element
spaces $V_{h_1}\subset V_{h_2}\subset V_{h_3}$ which are defined on the three level
meshes $\mathcal{T}_{h_1}$, $\mathcal{T}_{h_2}$ and $\mathcal{T}_{h_3}$.
Here we set $\mathcal{T}_H=\mathcal{T}_{h_1}$.
 The series of meshes $\mathcal{T}_{h_1}$, $\mathcal{T}_{h_2}$ and $\mathcal{T}_{h_3}$
 are produced by regular refinements with $\beta=2$.
The optimal control is discretized implicitly by variational
discretization concept proposed by Hinze \cite{Hinze05COAP} and
the discretized optimization problem is solved by projected gradient method (see \cite{LiuYan08book}).

In Algorithm \ref{Alg:3.2}, we note that on the coarsest finite element
space $V_{h_1}$ one needs to solve the optimization problem directly,
while on finer finite element spaces $V_{h_2}$ and $V_{h_3}$ one only needs to solve two
linear boundary value problems and one optimization problem in the coarsest finite element
 space $V_{h_1}$. From Tables \ref{table:1.1} and \ref{table:1.2}, we can observe
 that with same degree of freedoms the comparable errors can be obtained
 on finer finite element spaces $V_{h_2}$ and $V_{h_3}$ but
 with greatly reduced computational complexity by the multilevel correction method.
\begin{table}[ht]
\centering
\caption{Convergence history of $\|u-u_h\|_{0,\Omega}$ for Example \ref{Exm:1.1}.}\label{table:1.1}
\begin{tabular}{||c|c|c|c|c|c|c|c|c||}
\hline
$\#V_{h_1}$&$\|u-u_{h_1}\|_{0,\Omega}$&order&$\#V_{h_2}$&
$\|u-u_{h_2}\|_{0,\Omega}$&order&$\#V_{h_3}$&$\|u-u_{h_3}\|_{0,\Omega}$&order\\
\hline
$139$& 1.781810e-2& &  &   &  &  &&\\
\hline
$513$& 5.343963e-3 &1.7374   & 513  & 5.343963e-3&  & &&  \\
\hline
$1969$&1.316909e-3&2.0208   &  1969  &1.316909e-3& 2.0208  & 1969 &1.316909e-3& \\
\hline
$7713$&3.289924e-4&2.0010    &   7713  &3.289928e-4& 2.0010 & 7713 &3.289925e-4&2.0010  \\
\hline
$30529$&8.206814e-5&2.0032   &  30529 &8.208219e-5& 2.0029 & 30529 &8.206746e-5&2.0032  \\
\hline
$ 121473$& 2.051887e-5 &1.9999  &  121473 & 2.051903e-5& 2.0001 &  121473 &2.051942e-5&1.9998  \\
\hline
\end{tabular}
\end{table}
\begin{table}[ht]
\centering
\caption{ Convergence history of $\|y-y_h\|_{0,\Omega}$
for Example \ref{Exm:1.1}.}\label{table:1.2}
\begin{tabular}{||c|c|c|c|c|c|c|c|c||}
\hline
$\#V_{h_1}$&$\|y-y_{h_1}\|_{0,\Omega}$&order&$\#V_{h_2}$&
$\|y-y_{h_2}\|_{0,\Omega}$&order&$\#V_{h_3}$&$\|y-y_{h_3}\|_{0,\Omega}$&order\\
\hline
$139$&6.809744e-3& &  &   &  &  &&\\
\hline
$513$&1.722517e-3&1.9831   & 513  & 1.722504e-3&  & &&  \\
\hline
$1969$&4.322249e-4&1.9947   &  1969  &4.322236e-4& 1.9947  & 1969 &4.322212e-4&\\
\hline
$7713$&1.081827e-4&1.9983    &   7713  &1.081826e-4& 1.9983 & 7713 &1.081824e-4&1.9983\\
\hline
$30529$&2.705474e-5&1.9995   &  30529 &2.705475e-5& 1.9995& 30529 &2.705475e-5&1.9995\\
\hline
$121473 $& 6.764356e-6 &1.9999  &  121473 &6.764348e-6& 1.9999 &  121473 &6.764373e-6&1.9999\\
\hline
\end{tabular}
\end{table}

Then we test our proposed multilevel correction algorithm on the sequence of
multilevel meshes.
Two initial meshes with $68$ and $139$ nodes as shown in Figure \ref{fig:1} are used.
We show the errors of the discretised optimal state $y_h$, the adjoint state $p_h$
and the optimal control $u_h$ in Figure \ref{fig:2} on two sequences of meshes
after $7$ and $6$ regular refinements with $\beta =2$, respectively. It is also observed that second order
convergence rate holds for $y_h$, $p_h$ and $u_h$. We remark that the solutions by the
multilevel correction method are almost the same as the results by the direct
optimization problem solving on the same meshes.

The algorithm is obviously more efficient if $\beta$ is as large as possible, i.e., the coarse finite element 
space is as coarse as possible. To support this we also consider two sequences of meshes after $4$ regular refinements with $\beta =4$ based on the above mentioned two initial meshes. We show the errors of
 the discretised optimal state $y_h$, the adjoint state $p_h$ and the optimal control $u_h$ in 
 Figure \ref{fig:4}, second order convergence rates for the optimal control, the state and adjoint state can be observed.

\vskip0.8cm
\begin{figure}[ht]
\centering
\includegraphics[totalheight=5cm, angle =-90]{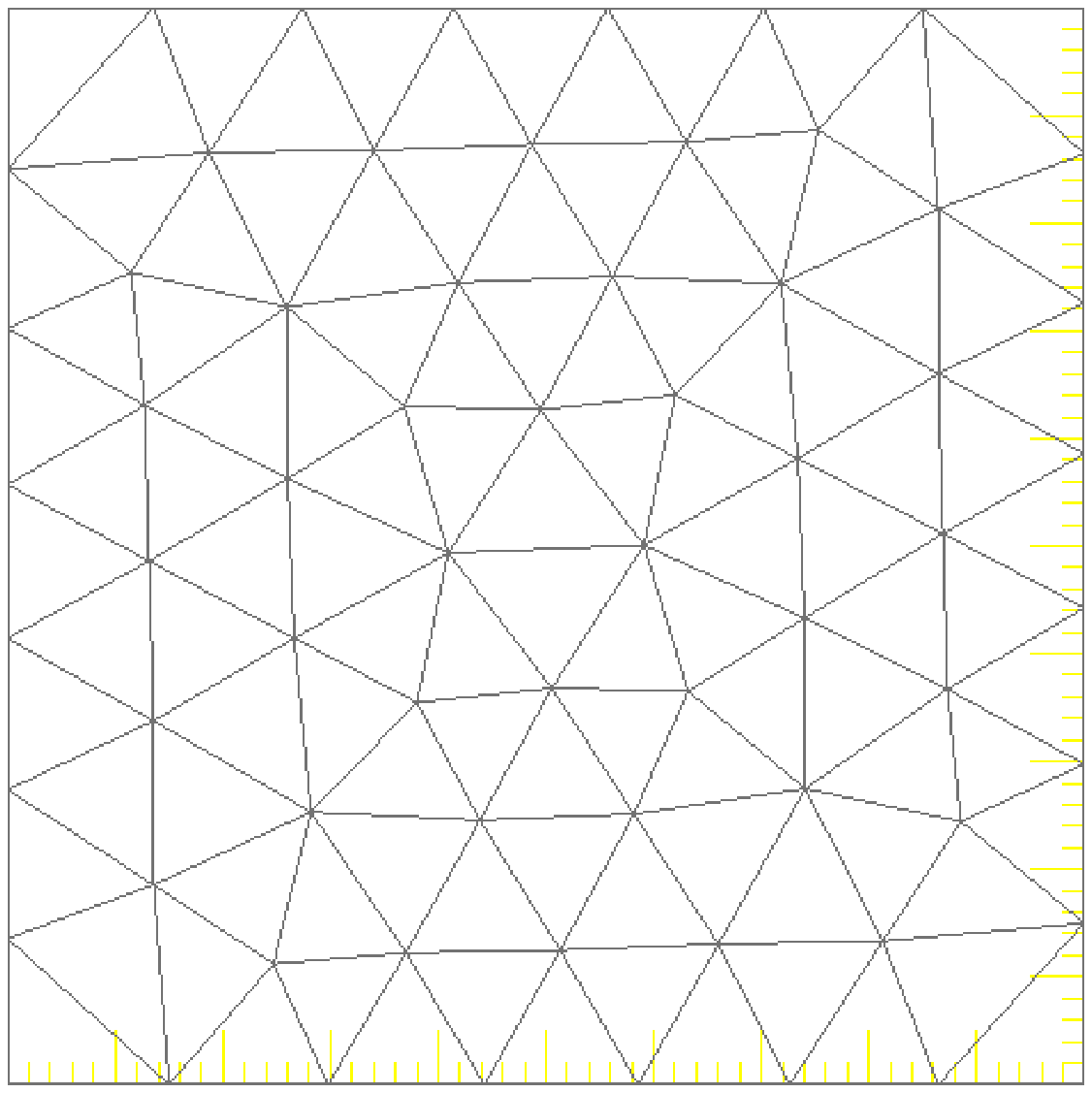}\hspace{3cm}
\includegraphics[totalheight=5cm, angle =-90]{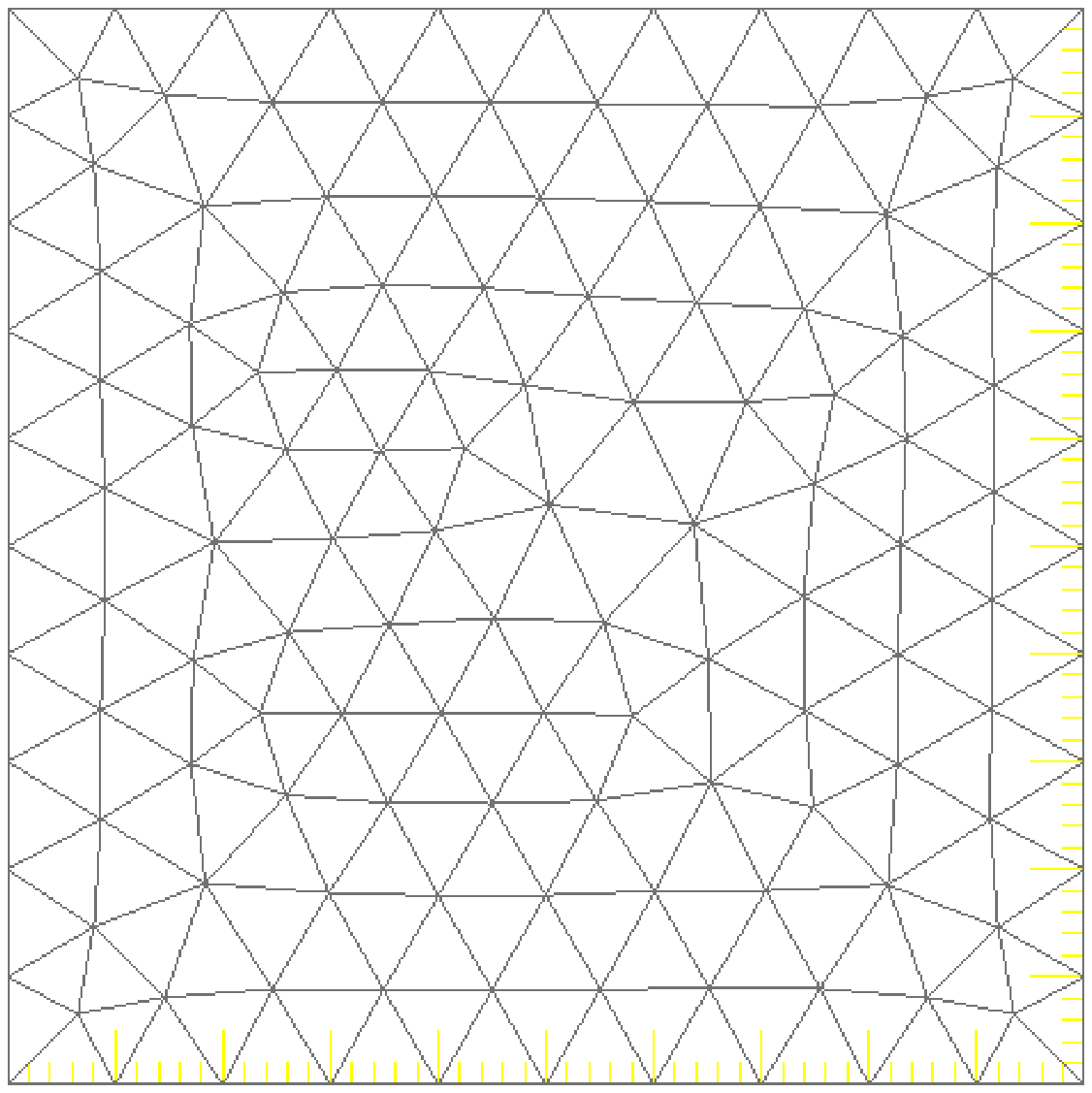}
\caption{Two initial meshes for the unit square with $68$
nodes (left) and $139$ nodes (right).}\label{fig:1}
\end{figure}

\begin{figure}[ht]
\centering
\includegraphics[width=7cm,height=7cm]{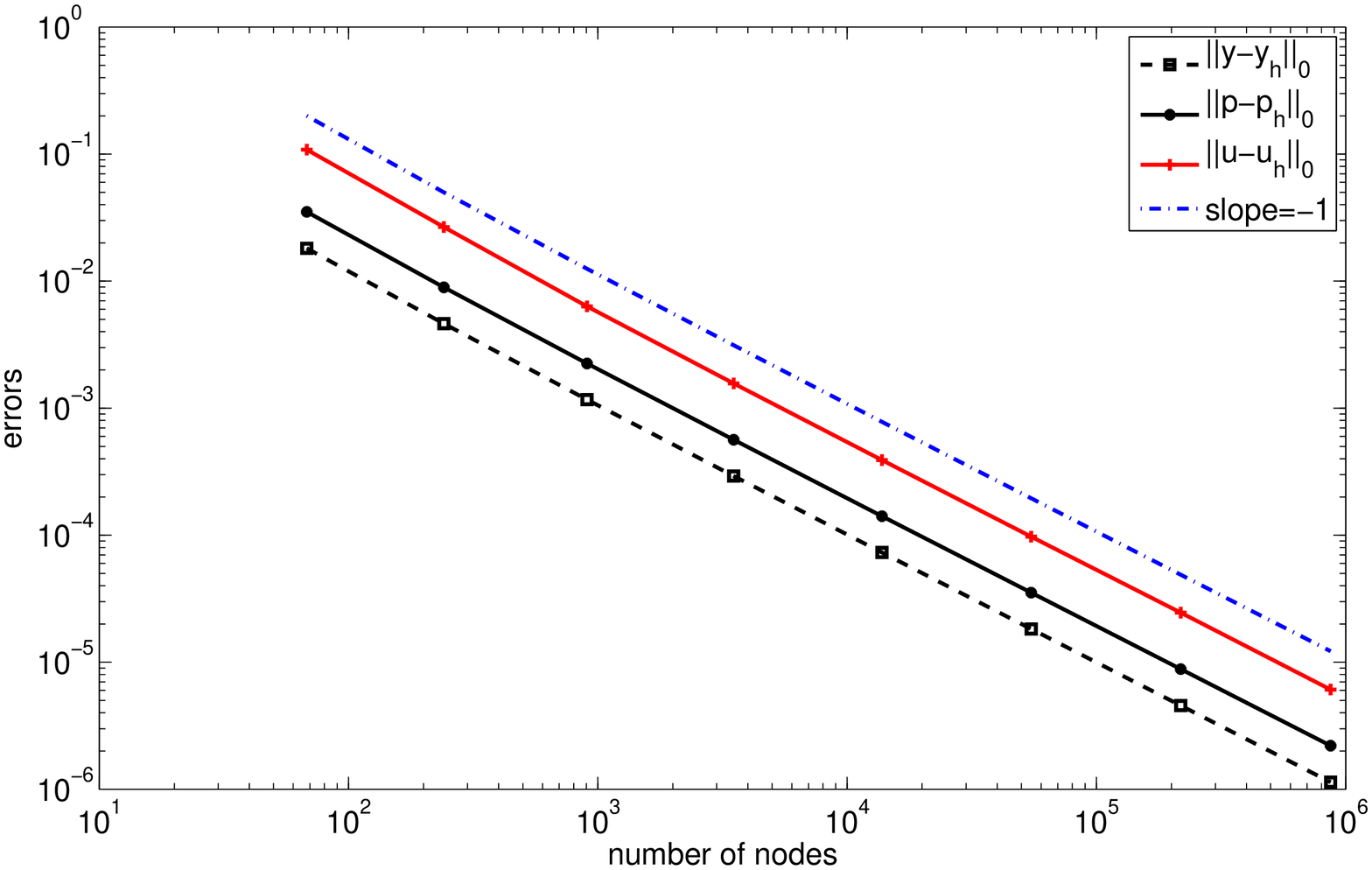}\hspace{0.3cm}
\includegraphics[width=7cm,height=7cm]{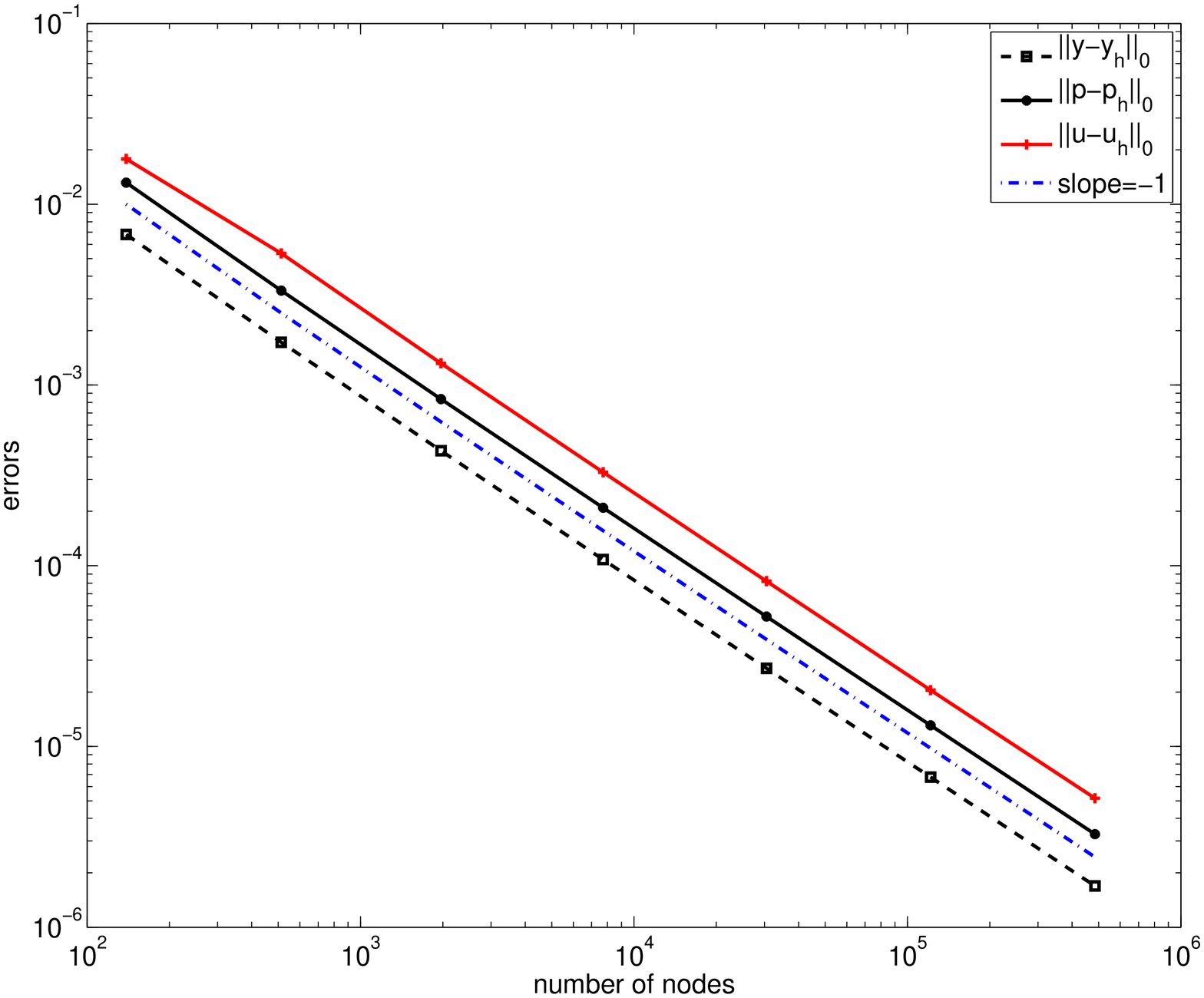}
\caption{Errors of the multilevel correction algorithm
for the discretised optimal state $y_h$, adjoint state $p_h$
and optimal control $u_h$  of Example \ref{Exm:1.1} (the left figure corresponds to the
left mesh in Figure \ref{fig:1} with 68 nodes as the initial mesh
and $7$ uniform refinements with $\beta=2$,
the right figure corresponds to the right mesh in Figure \ref{fig:1}
with $139$ nodes as the initial mesh and 6 uniform refinements with $\beta=2$).}
\label{fig:2}
\end{figure}

\begin{figure}[ht]
\centering
\includegraphics[width=7cm,height=7cm]{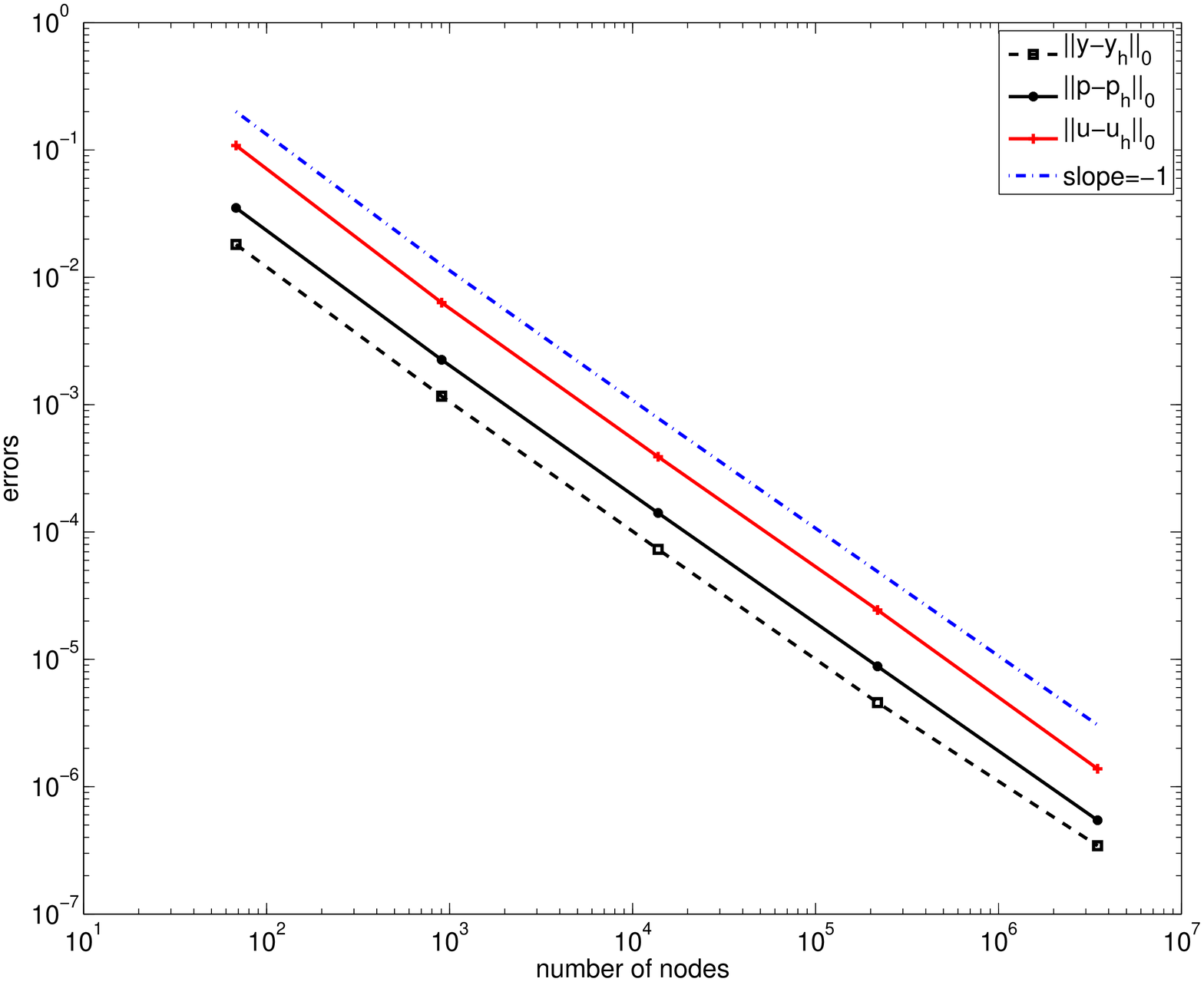}\hspace{0.3cm}
\includegraphics[width=7cm,height=7cm]{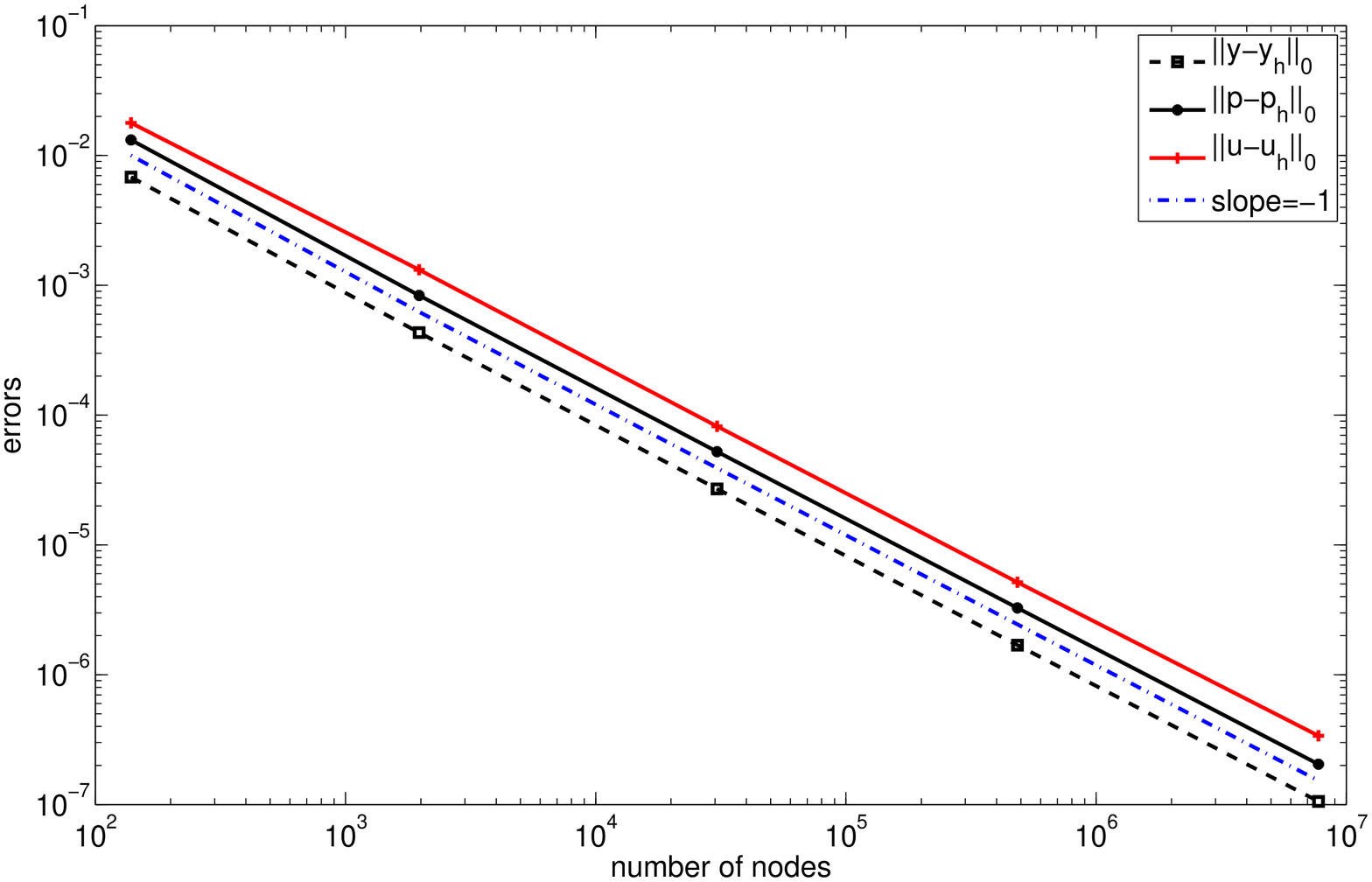}
\caption{Errors of the multilevel correction algorithm
for the discretised optimal state $y_h$, adjoint state $p_h$
and optimal control $u_h$  of Example \ref{Exm:1.1} (the left figure corresponds to the
left mesh in Figure \ref{fig:1} with 68 nodes as the initial mesh
and $4$ uniform refinements with $\beta=4$,
the right figure corresponds to the right mesh in Figure \ref{fig:1}
with $139$ nodes as the initial mesh and 4 uniform refinements with $\beta=4$).}
\label{fig:4}
\end{figure}

In the second example, we  consider the following optimal controls of semilinear elliptic equation:
\begin{eqnarray}
\min\limits_{u\in U_{ad}}\ \ J(y,u)={1\over
2}\|y-y_d\|_{0,\Omega}^2 +
\frac{\alpha}{2}\|u\|_{0,\Omega}^2\nonumber
\end{eqnarray}
subject to
\begin{equation}\label{example_state_1}
\left\{\begin{array}{llr} -\Delta y+y^3=f+u \ \ &\mbox{in}\
\Omega, \\
 \ y=0  \ \ \ &\mbox{on}\ \partial\Omega,\\
 a(x)\leq u(x)\leq b(x)\ \ &\mbox{a.e.\ in}\ \Omega.
\end{array} \right.
\end{equation}
\begin{Example}\label{Exm:1.2}
We set $\Omega=[0,1]^2$. Let
$a=0$, $b=3$, $\alpha =0.01$, $g_1(x_1,x_2)=2\pi^2\sin(\pi x_1)\sin(\pi x_2)$, 
$g_2(x_1,x_2)=\sin^3(\pi x_1)\sin^3(\pi x_2)$. Then $f$
is chosen as
\begin{eqnarray}\label{nonumber}
f(x_1,x_2) =\left\{\begin{array}{llr}
g_1(x_1,x_2)+g_2(x_1,x_2)-a,\ \ \ \ \ &g_1(x_1,x_2)< a;\\
g_2(x_1,x_2), \  \ \ \ \ &a\leq g_1(x_1,x_2)\leq b;\\
g_1(x_1,x_2)+g_2(x_1,x_2)-b,\ \ \ \ \ &g_1(x_1,x_2)>b.
\end{array}\right.
\end{eqnarray}
Due to the state equation (\ref{example_state_1}),
we obtain the exact optimal control $u$
\begin{eqnarray}\label{nonumber}
u(x_1,x_2) =\left\{\begin{array}{llr}
a,\ \ \ \ \ &g_1(x_1,x_2)< a;\\
g_1(x_1,x_2), \  \ \ \ \ &a\leq g_1(x_1,x_2)\leq b;\\
b,\ \ \ \ \ &g_1(x_1,x_2)>b.
\end{array}\right.
\end{eqnarray}
We also have
\begin{eqnarray*}
y(x_1,x_2)=\sin(\pi x_1)\sin(\pi x_2),\nonumber\\
p(x_1,x_2)=-2\pi^2\alpha\sin(\pi x_1)\sin(\pi x_2).
\end{eqnarray*}
The desired state is given by
\begin{eqnarray*}
y_d(x_1,x_2)=y(x_1,x_2)-3y^2p+4\pi^4\alpha\sin(\pi x_1)\sin(\pi x_2).
\end{eqnarray*}
\end{Example}

We solve this nonlinear optimisation problem with standard SQP method (see \cite{Hinze09book}).
 We test our proposed multilevel correction algorithm for the above nonlinear optimal control problems 
 on the sequence of multilevel meshes. We use the same sequence of meshes  generated with $\beta=2$ 
 as in the first example. It is also observed that second order
convergence rate holds for $y_h$, $p_h$ and $u_h$ in the nonlinear case. We remark that the solutions
 by the multilevel correction method are almost the same as the results by the direct
optimization problem solving on the same meshes.

\begin{figure}[ht]
\centering
\includegraphics[width=7cm,height=7cm]{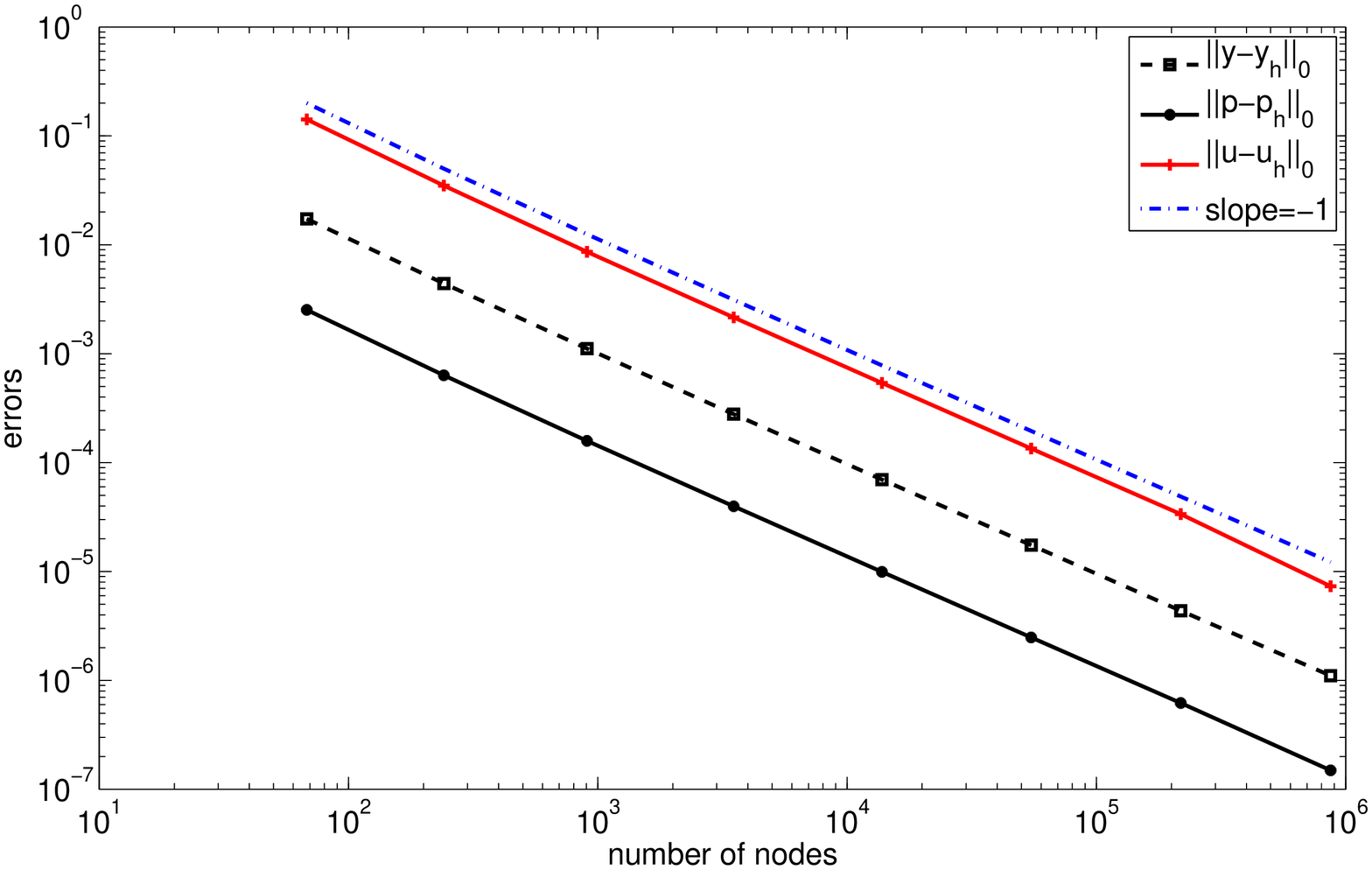}\hspace{0.3cm}
\includegraphics[width=7cm,height=7cm]{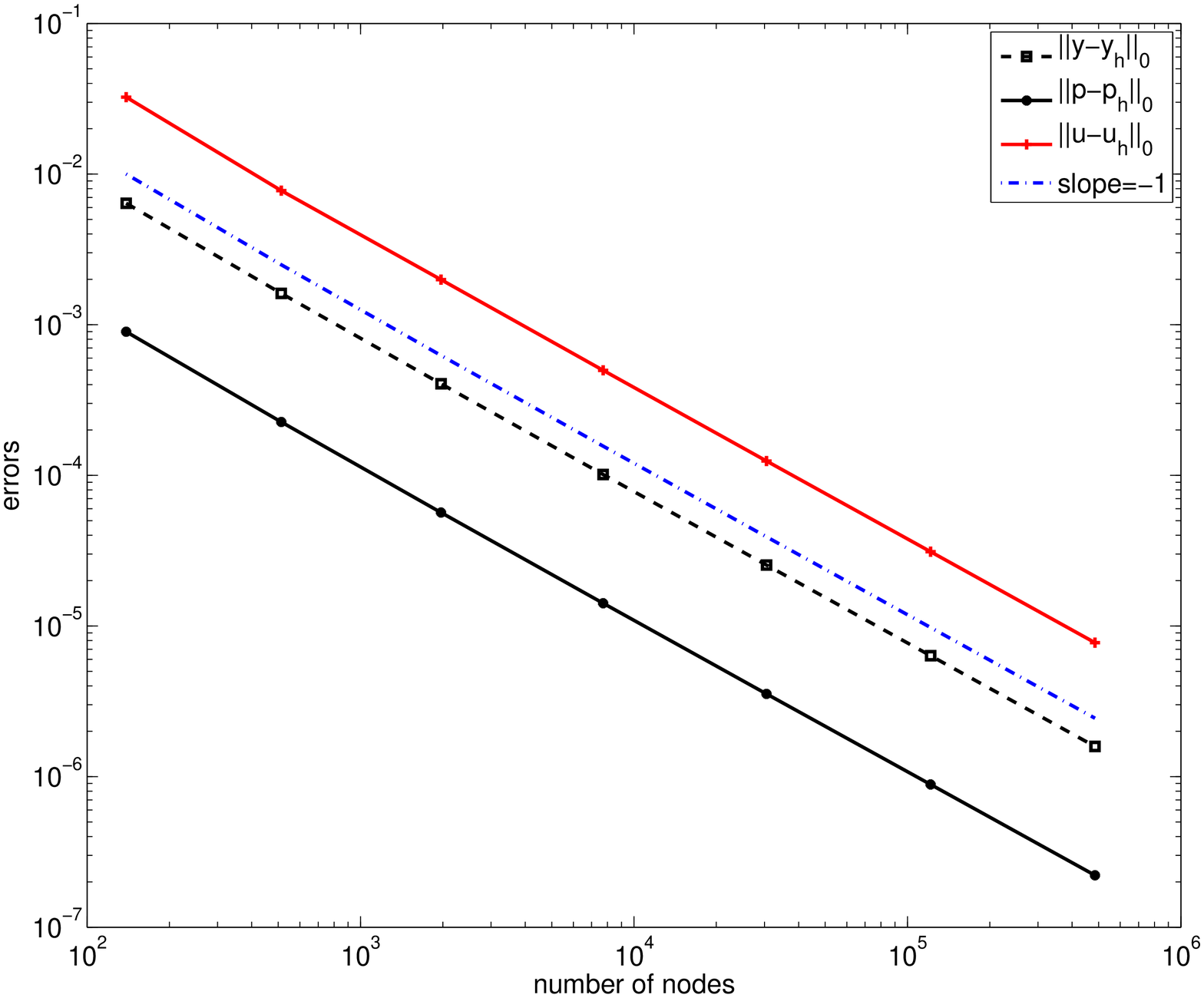}
\caption{Errors of the multilevel correction algorithm
for the discretised optimal state $y_h$, adjoint state $p_h$
and optimal control $u_h$ of Example \ref{Exm:1.2} (the left figure corresponds to the
left mesh in Figure \ref{fig:1} with 68 nodes as the initial mesh
and $7$ uniform refinements,
the right figure corresponds to the right mesh in Figure \ref{fig:1}
with $139$ nodes as the initial mesh and 6 uniform refinements).}
\label{fig:3}
\end{figure}

\section{Concluding remarks}
In this paper, we introduce a type of multilevel correction scheme
to solve the optimal control problem. The idea here is to use the
multilevel correction method to transform the solution of the optimal control
 problem on the finest finite element space to a series of solutions of the corresponding linear
boundary value problems which can be solved by the multigrid method and a
series of solutions of optimal control problems on the coarsest
finite element space. The optimal control problem solving is more difficult than
the linear boundary value problem solving which has already many efficient solvers. Thus,
the proposed method can improve the overall efficiency for the optimization problem solving.
With the complexity analysis, we can find that the multilevel correction
scheme can obtain the optimal finite element approximation by the almost optimal
computational work \cite{Xie_IMA,Xie_JCP}.

We can replace the multigrid method by other types of efficient solvers
such as algebraic multigrid method and the domain decomposition method.
Furthermore, the framework here can also be coupled with the
parallel method and the adaptive refinement technique. The ideas can be extended to
other types of linear and nonlinear optimal control problems.

\section*{Acknowledgments}
The first author was supported by the National Basic Research Program of China under grant 2012CB821204,
 the National Natural Science Foundation of China under grant 11201464 and 91330115, and the scientific 
 Research Foundation for the Returned Overseas Chinese Scholars, State Education Ministry. The second 
 author gratefully acknowledges the support of the National Natural Science Foundation of China 
 (91330202, 11371026, 11001259, 11031006, 2011CB309703), the National Center for Mathematics 
 and Interdisciplinary Science, CAS and the President Foundation of AMSS-CAS. The third author 
 was supported by the National Natural Science Foundation of China under grant 11171337.


\end{document}